\newtheorem{theorem}{Theorem}[section]
\newtheorem{lemma}[theorem]{Lemma}
\newtheorem{proposition}[theorem]{Proposition}
\newtheorem{corollary}[theorem]{Corollary}
\theoremstyle{definition}
\newtheorem{definition}[theorem]{Definition}
\theoremstyle{remark}
\newtheorem{remark}[theorem]{Remark}
\numberwithin{equation}{section}
\newcommand{\sH}{\mathfrak{H}}
\newcommand{\cF}{\mathcal{F}}
\newcommand{\cC}{\mathcal{C}}
\begin{document}

\setcounter{page}{1}

\title[On $J$-frames related to maximal definite subspaces]{On $J$-frames related to maximal definite subspaces}

\author[A. Kamuda \MakeLowercase{and} S. Ku\.{z}el]{Alan Kamuda$^1$ \MakeLowercase{and} Sergiusz Ku\.{z}el$^1$$^{*}$}

\address{$^{1}$AGH University of Science and Technology, 30-059  Krak\'{o}w, Poland.}
\email{\textcolor[rgb]{0.00,0.00,0.84}{alan.kamuda@gmail.com;
kuzhel@agh.edu.pl}}

\subjclass[2010]{Primary 47B50; Secondary 46C20.}

\keywords{Krein space, frame, $J$-orthogonal sequence, Schauder basis.}

\begin{abstract}
A definition of frames in Krein spaces is proposed which generalizes the concept of $J$-frames defined by
 Giribet et al., J. Math. Anal. Appl. {\bf 393}  (2012), 122-137.
 The difference consists in the fact that a $J$-frame is related to maximal definite subspaces $\mathcal{M}_\pm$
which are not assumed to be uniformly definite.  The latter allows  to  extend the set of $J$-frames.
In particular, some  $J$-orthogonal Schauder bases can be interpreted as $J$-frames.
\end{abstract} \maketitle

\section{Introduction}
Usually, frames are defined in a Hilbert space setting:
let $\sH$  be an (infinite-dimensional) Hilbert space with an inner product $(\cdot,\cdot)$. A frame for $\sH$ is
a family of vectors $\cF=\{f_n\}$  that satisfies inequalities
\begin{equation}\label{e1}
A||f||^2\leq \sum_{n\in\mathbb{N}}{|(f,f_n)|^2}\leq B||f||^2, \qquad f\in \sH,
\end{equation}
for constants $0<A\leq B< \infty$, which are called frame bounds.

The frame bounds in \eqref{e1} deal with the choice of inner product $(\cdot,\cdot)$.
Sometimes, an improper choice may lead to inconveniences. To explain this point
we consider (following \cite{Esmeral}) the indefinite inner product
$[f,g]=\int_{\mathbb{R}}f(x)\overline{g(x)}\omega(x)dx,$
where the real-valued function $\omega(x)$  is continuous.
The linear set $\mathcal{L}$ of square integrable functions endowed with the indefinite inner product
$[\cdot,\cdot]$ forms a Krein space with the operator of fundamental symmetry $J$ determined by the multiplication
by the sign function of $\omega$. The associated Hilbert space coincides with $\mathcal{L}$ endowed with
the positive inner product
$$
(f,g)=\int_{\mathbb{R}}f(x)\overline{g(x)}|\omega(x)|dx, \qquad f,g\in\mathcal{L}.
$$
According to the frame theory, one should work with $(\cdot,\cdot)$ instead of
the indefinite inner product $[\cdot,\cdot]$. However, by passing from $\omega$ to
$|\omega|$, one might lose some desired properties of $\omega$ (differentiability, for example).
Therefore, it looks natural to work with frames defined in terms of an indefinite inner product.
The development of such ideology leads to the definition below.

Denote by $(\mathfrak{H}, [\cdot,\cdot])$ a Krein space with indefinite inner product $[\cdot,\cdot]$ and with the
fundamental symmetry $J$. The associated Hilbert space $(\mathfrak{H}, (\cdot,\cdot))$ is endowed with the positive
inner product $(\cdot,\cdot)=[J\cdot,\cdot]$ and the norm $\|\cdot\|=\sqrt{(\cdot,\cdot)}$.
\begin{definition}[\cite{Esmeral}]\label{def1}
Let $(\mathfrak{H}, [\cdot,\cdot])$ be a Krein space.
A family of vectors $\cF=\{f_n\}$ is called a frame if
$$
A||f||^2\leq \sum_{n\in\mathbb{N}}{|[f,f_n]|^2}\leq B||f||^2, \qquad f\in \sH,
$$
for some constants $0<A\leq B< \infty$.
\end{definition}

Since the operator of fundamental symmetry $J$ is unitary and self-adjoint in $\sH$,
Definition \ref{def1} implies that  $\cF$ is a frame in the Hilbert
space $(\mathfrak{H}, (\cdot,\cdot))$ if and only if $\cF$ is a frame with the same frame bounds in the
Krein space $(\mathfrak{H}, [\cdot,\cdot])$ \cite[Theorem 3.3]{Esmeral}. This means that
such definition just rephrases the conventional definition of frames in terms of Krein spaces.
 It looks natural to consider more general setting, i.e.,  to define frames in the Hilbert space $\sH_W$
 with $W$-metric $(W\cdot,\cdot)$ \cite[Definition 2.12]{new3}, which is the completion of $\sH$ with respect to $(|W|\cdot, \cdot)$, 
 where a bounded self-adjoint operator  $W$ with $\ker{W}=\{0\}$ 
is the `light version' of the operator of fundamental symmetry $J$ \cite[Section 1.6]{AZ}.

Another reason for studying frames in Krein spaces deals with signal processing problem.
The existence of various decompositions of a given Krein space $(\mathfrak{H}, [\cdot,\cdot])$ into direct sums of positive and
negative subspaces (see Section 2 for the Krein spaces terminology) allows one to construct effective filters for the signals considering vectors $f\in\sH$
as disturbances if theirs indefinite inner products $[f,f]$ are close to zero; see the
discussion at the beginning of Section 3 in \cite{Giribet2}.  Such kind of ideas gives rise to
the following definition of $J$-frame:

Let $\cF=\{f_n\}$ be a Bessel sequence in the Hilbert space $(\mathfrak{H}, (\cdot,\cdot))$ with
synthesis operator $T :  l_2(\mathbb{N}) \to \sH$. Denote $\mathbb{N}_+=\{n\in\mathbb{N} :  [f_n,f_n]\geq{0}\}$,
$\mathbb{N}_-=\{n\in\mathbb{N} :  [f_n,f_n]<{0}\}$, consider the orthogonal decomposition
\begin{equation}\label{fr28}
l_2(\mathbb{N})=l_2(\mathbb{N}_+)\oplus{l_2(\mathbb{N}_-)}
\end{equation}
and determine the restrictions of $T$ onto $l_2(\mathbb{N}_\pm)$:
$$
T_{\pm}\{c_n\}=\sum_{n\in\mathbb{N}_{\pm}}c_nf_n, \qquad \{c_n\}\in{l_2(\mathbb{N})}.
$$
\begin{definition}[\cite{Giribet2}]\label{def2}
The Bessel sequence $\cF=\{f_n\}$ is called a $J$-frame if the ranges $\mathcal{R}(T_{\pm})$ are,
respectively, maximal uniformly positive and maximal uniformly negative subspaces of the Krein
space  $(\mathfrak{H}, [\cdot,\cdot])$.
\end{definition}

The requirement of being maximal uniformly definite imposed on $\mathcal{R}(T_{\pm})$  is
sufficiently strong and an elementary analysis carried out in \cite{Giribet2}  shows that each $J$-frame  $\cF$ in the Krein space
 $(\mathfrak{H}, [\cdot,\cdot])$ has to be a conventional frame in the Hilbert space
 $(\mathfrak{H}, (\cdot,\cdot))$.  Moreover, $\mathcal{R}(T_{\pm})=\mathcal{M}_{\pm}:=\overline{\mbox{span}\{f_n: \ n\in \mathbb{N}_{\pm}\}}$
 and the families $\cF_\pm=\{f_n\}_{n\in\mathbb{N}_\pm}$
are conventional frames of the Hilbert spaces $(\mathcal{M}_{\pm}, \pm[\cdot,\cdot])$.

Obviously, each $J$-frame is a frame in the sense of Definition \ref{def1}.
The inverse implication is not true. In particular, there are orthonormal bases of
the Hilbert space $(\mathfrak{H}, (\cdot,\cdot))$ which cannot be $J$-frames.
Indeed, let $\mathcal{L}$ be a hypermaximal neutral subspace of the Krein space
$(\mathfrak{H}, [\cdot,\cdot])$, then $\mathfrak{H}=\mathcal{L}\oplus{J\mathcal{L}}$.
If $\{f_n\}$ is an orthonormal basis of $\mathcal{L}$, then ${\mathcal{F}}=\{f_n\}\cup\{Jf_n\}$ is an
orthonormal basis of $(\mathfrak{H}, (\cdot,\cdot))$.
However, ${\mathcal{F}}$ cannot be a $J$-frame because, $\mathcal{M}_{+}=\sH$
and $\mathcal{M}_{-}=\{0\}$.

In the last few years,  many papers devoted to the development of full scale frame theory based on Definition \ref{def1}
\cite{new2, new3, Esmeral, Esmeral2}  as well as on Definition \ref{def2} \cite{Giribet2, Giribet, new1, Hossein} have been published. 
In particular,
$J$-fusion frames were defined and studied in \cite{new2, new1}.
However, in our opinion, the above definitions do not completely fit  the ideology of Krein spaces
and some modification that provides  deeper insights
into the structural subtleties of frames in Krein spaces is still needed.
The matter is that the concept
of Krein spaces is more reach in contrast to Hilbert ones due to the possibility to generate
infinitely many (not necessarily equivalent) definite inner products $(\cdot,\cdot)$
beginning with given indefinite inner product $[\cdot,\cdot]$.
For this reason, it seems natural to define frames  in a Krein space $(\mathfrak{H}, [\cdot,\cdot])$
in terms of  frame inequalities based on the indefinite inner product $[\cdot,\cdot]$
 without any relation to frames in the associated Hilbert space $(\sH, (\cdot,\cdot))$.

Due to \cite[Theorem 3.9]{Giribet2},  Definition \ref{def2} of  $J$-frames can be rewritten as follows:
$\cF=\{f_n\}$ is a $J$-frame if and only if $\cF$ is a
conventional frame in the associated Hilbert space $(\mathfrak{H}, (\cdot,\cdot))$,
the conditions $\mathcal{M}_{\pm}\cap\mathcal{M}_{\pm}^{[\bot]}=\{0\}$ hold (the $J$-orthogonal complements
$\mathcal{M}_{\pm}^{[\bot]}$  of $\mathcal{M}_{\pm}$ are defined in \eqref{fr10a}), and
there exist constants $0<A_\pm\leq{B_\pm}$ such that
$$
\begin{array}{c}
A_{+}[f,f]\leq \sum_{n\in\mathbb{N}_{+}}{|[f,f_n]|^2}\leq B_{+}[f,f],  \quad  \forall{f}\in\mathcal{M}_+ \vspace{3mm} \\
-A_{-}[f,f]\leq \sum_{n\in\mathbb{N}_{-}}{|[f,f_n]|^2}\leq -B_{-}[f,f], \quad  \forall{f}\in\mathcal{M}_-.
\end{array}
$$

The subspaces
\begin{equation}\label{fr101b}
\mathcal{M}_{\pm}=\overline{\mbox{span}\{\mathcal{F}_{\pm}\}},  \qquad  \cF_\pm=\{f_n\}_{n\in\mathbb{N}_\pm}
\end{equation}
in the above inequalities have to be maximal uniformly definite in the Krein space  $(\mathfrak{H}, [\cdot,\cdot])$.
Weakening this condition,  we generalize the concept of $J$-frames.
\begin{definition}\label{def3}
Let $(\mathfrak{H}, [\cdot,\cdot])$ be a Krein space with fundamental symmetry $J$.
A family of vectors $\cF=\{f_n\}$ is called a $J$-frame if the subspaces $\mathcal{M}_{\pm}$ defined by \eqref{fr101b}
are maximal definite in $(\mathfrak{H}, [\cdot,\cdot])$ and
there are constants $0<A\leq{B}$ such that
\begin{equation}\label{fr5b}
\begin{array}{c}
A{|}[f,f]{|}\leq \sum_{n\in\mathbb{N}_{+}}{|[f,f_n]|^2}\leq B|[f,f]|,  \quad \forall{f}\in\mbox{span}\{\cF_+\}\vspace{3mm} \\
A{|}[f,f]{|}\leq \sum_{n\in\mathbb{N}_{-}}{|[f,f_n]|^2}\leq B|[f,f]|,  \quad \forall{f}\in\mbox{span}\{\cF_-\}.
\end{array}
\end{equation}
\end{definition}

The aim of this work is to develop a theory of $J$-frames which is based on  Definition \ref{def3}.
The results essentially depend on coinciding the direct sum $\mathcal{D}=\mathcal{M}_+\dot+\mathcal{M}_-$
 with $\sH$.  If $\mathcal{D}=\sH$,  then the subspaces  $\mathcal{M}_\pm$
have to be uniformly definite in the Krein space $(\sH, [\cdot,\cdot])$, the conditions of Definition \ref{def2} are satisfied
 and our results in Section \ref{3.1} are close to \cite{Giribet2}, see Remark \ref{rem1}.

If $\mathcal{D}\not=\sH$,  then at least one of $\mathcal{M}_{\pm}$ loses the property of being uniformly definite
and the new inner product $(\cdot,\cdot)_1$ defined on $\mathcal{D}$ is not equivalent to the initial one.
 In this case Definition  \ref{def2} cannot be applied and, moreover,  this case cannot be studied within
framework of Hilbert spaces $\sH_W$ with $W$-metric, see Section \ref{2.2}.  We show that
each $J$-frame $\cF$ can be realized as a conventional frame in the new Hilbert space $(\widehat{\sH}, (\cdot,\cdot)_1)$
(Proposition \ref{prop3}) and the reconstruction formula \eqref{fr61} holds for elements of $\widehat{\sH}$.
The relevant formulas are essentially simplified when the subspaces $\mathcal{M}_{\pm}$ are assumed to be $J$-orthogonal
(Propositions \ref{fr41} -- \ref{fr91}).

Definition \ref{def3} allows one to consider some $J$-orthogonal sequences and
$J$-orthogonal Schauder bases as $J$-frames (Proposition \ref{prop5}, Corollary \ref{fr73}).
This gives the possibility to describe the common part $\mathfrak{D}=\sH\cap\widehat{\sH}$
and the subset $\mathcal{D}_{un}\subset\mathfrak{D}$ where the series
$f=\sum_{n\in\mathbb{N}}c_nf_n$ converges unconditionally in terms
of the corresponding coefficients $\{c_n\}$ (Proposition \ref{fr78}, Theorem \ref{fr75}).

The paper is organized as follows. We begin with an elementary presentation of the Krein spaces theory.
The monograph \cite{AZ} is recommended as complementary reading on the subject.
In Section 3, we show that each $J$-frame can be considered as a conventional frame in some Hilbert space.
The corresponding reconstruction formulas are rewritten in terms of indefinite inner products.
Special attention is paid to the case where the corresponding subspaces $\mathcal{M}_{\pm}$ are $J$-orthogonal.
Section 4 deals with special classes of $J$-frames: $A$-tight, exact, and $J$-orthogonal.

In what follows, $\mathfrak{H}$ means a complex Hilbert space with inner product $(\cdot,\cdot)$ linear in the first argument.
Sometimes, it is useful to specify the inner product associated with $\mathfrak{H}$.
In that case the notation $(\mathfrak{H}, (\cdot,\cdot))$ will be used.
All topological notions refer to the Hilbert space norm topology. For instance, a subspace
of  $(\mathfrak{H}, (\cdot, \cdot))$  is a linear manifold in $\mathfrak{H}$  which is closed with respect  to the norm $\|\cdot\|=\sqrt{(\cdot,\cdot)}$.
The symbols $\mathcal{D}(A)$ and $\mathcal{R}(A)$ denote the domain and the range of a linear operator
$A$. The notation ${\mathcal{L}_1}\oplus{\mathcal{L}_2}$  means the orthogonal (with respect to an inner product)
direct sum of two subspaces $\mathcal{L}_i$.

\section{Elements of Krein spaces theory}
\subsection{Notation and terminology.}  Let $\sH$ be a complex linear space with
an indefinite inner product (indefinite metric) $[\cdot, \cdot]$.
The space $(\sH, [\cdot, \cdot])$ is called \emph{a
Krein space} if $\sH$ admits a  decomposition
\begin{equation}\label{fr7}
\sH=\sH_{+}[\dot{+}]\sH_{-},
\end{equation}
which is an orthogonal (with respect to $[\cdot, \cdot]$) direct sum of two Hilbert spaces $(\sH_+, [\cdot,\cdot])$ and
$(\sH_-, -[\cdot, \cdot])$.
The decomposition \eqref{fr7} is called \emph{fundamental}
 and it induces the positive inner
product
\begin{equation}\label{fr9}
(f,g):=[f_+,g_+]-[f_-,g_-], \quad f=f_++f_-, \ g=g_+-g_-, \ f_\pm, g_\pm\in\sH_\pm
\end{equation}
 and the associated Hilbert space $(\sH, (\cdot,\cdot))$. The operator
\begin{equation}\label{fr10}
 Jf=f_+-f_-, \quad f=f_++f_-, \quad f_\pm\in\sH_\pm
\end{equation}
is unitary and self-adjoint in the Hilbert space $(\sH, (\cdot,\cdot))$ and it is called
\emph{the operator of fundamental symmetry}. The fundamental symmetry $J$
 allows one to express $(\cdot,\cdot)$ in terms of indefinite metric: $(\cdot, \cdot)=[J\cdot,\cdot]$.

A closed subspace $\mathcal{L}$ of the Krein space $({\mathfrak H}, [\cdot,\cdot])$  is called
\emph{neutral, negative, positive} if all nonzero elements $f\in\mathcal{L}$ are, respectively,
neutral $[f,f]=0$, negative $[f,f]<0$ or positive $[f,f]>0$. Further,
$\mathcal{L}$ is called \emph{uniformly positive, uniformly negative} if, respectively,
$[f,f]\geq{\alpha}(f,f)$, \  $-[f,f]\geq{\alpha}(f,f)$ for certain  $\alpha>0$ and all $f\in\mathcal{L}.$
A subspace $\mathcal{L}$ of ${\mathfrak H}$ is called \emph{definite} if it is either positive or negative.
 The term \emph{uniformly definite} is defined accordingly.

In each of the above mentioned classes we can define maximal subspaces.
For instance, a closed positive subspace $\mathcal{L}$ is called
\emph{maximal positive} if $\mathcal{L}$ is not a proper subspace of a positive subspace in $\mathfrak{H}$.
A maximal neutral subspace $\mathcal{L}$ is called
\emph{hypermaximal} if the Hilbert space $(\sH, (\cdot,\cdot))$ admits the decomposition
 $\mathfrak{H}=\mathcal{L}\oplus{J\mathcal{L}}$.

Subspaces $\mathcal{L}_1, \mathcal{L}_2$ of ${\mathfrak H}$
are said to be \emph{$J$-orthogonal} if $[f,g]=0$ for all
$f\in\mathcal{L}_1$ and $g\in\mathcal{L}_2$.
The $J$-orthogonal complement of a subspace $\mathcal{L}$ of $\mathfrak{H}$ is defined as
\begin{equation}\label{fr10a}
{\mathcal L}^{[\bot]}=\{g\in{\mathfrak H}\ :\ [f,g]=0,\ \forall{f}\in
{\mathcal L}\}
\end{equation}
and it is a closed linear subspace of the Hilbert space $(\mathfrak{H}, (\cdot,\cdot))$.

Let $A$ be a  densely defined operator acting in a Krein space $(\sH, [\cdot,\cdot])$. Repeating
the standard definition of the adjoint operator with the use of an indefinite inner
product $[\cdot,\cdot]$ we define the adjoint operator
$A^+$ of $A$. In this case
 $[Af,g]=[f,A^+g]$ for all $f\in\mathcal{D}(A)$ and all $g\in\mathcal{D}(A^+)$.
An operator $A$  is called \emph{$J$-self-adjoint} if $A=A^+$. An operator $A$ is called \emph{$J$-positive} if
$[Af,f]>0$ for $f(\not=0)\in\mathcal{D}(A)$.

\subsection{Positive inner products generated by an indefinite inner product.} \label{2.2}
{\bf I.} Let $\mathcal{M}_{\pm}$ be maximal uniformly definite (positive/negative) subspaces of the Krein space $(\sH, [\cdot,\cdot])$.
It is easy to see from \cite[Lemma 2.1]{Grod}  that
\begin{equation}\label{sas12b}
\sH=\mathcal{M}_{+}\dot{+}\mathcal{M}_{-}.
\end{equation}
The direct sum \eqref{sas12b} determines the operator, cf. \eqref{fr10}
\begin{equation}\label{fr10b}
 {J_{\mathcal{M}}}f=f_{\mathcal{M}_{+}}-f_{\mathcal{M}_{-}}, \quad f=f_{\mathcal{M}_{+}}+f_{\mathcal{M}_{-}}, \quad f_{\mathcal{M}_{\pm}}\in{\mathcal M}_\pm
\end{equation}
and the new positive inner product, cf. \eqref{fr9}
\begin{equation}\label{fr9b}
(f,g)_1:=[f_{\mathcal{M}_{+}},g_{\mathcal{M}_{+}}]-[f_{\mathcal{M}_{-}}, g_{\mathcal{M}_{-}}].
\end{equation}
which is equivalent to $(\cdot,\cdot)$.
The operator $J_{\mathcal{M}}$ is bounded in $\sH$ and $J_{\mathcal{M}}^2=I$.
\begin{lemma}\label{lem2}
Let $(\cdot,\cdot)_1$ be defined by \eqref{fr9b}. Then
\begin{equation}\label{fr14}
(\cdot,\cdot)_1=[\cC\cdot, \cdot], \quad \cC=\frac{1}{2}(J_{\mathcal{M}}+J_{\mathcal{M}}^+),
\end{equation}
where $J_{\mathcal{M}}^+$ is the adjoint of $J_{\mathcal{M}}$ with respect to the indefinite inner product $[\cdot, \cdot]$.
\end{lemma}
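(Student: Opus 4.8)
The plan is to verify the identity \eqref{fr14} by a direct computation using the decomposition $f = f_{\mathcal{M}_+} + f_{\mathcal{M}_-}$ induced by \eqref{sas12b} together with the defining property of the $[\cdot,\cdot]$-adjoint. First I would record that $J_{\mathcal{M}}$ is bounded and everywhere defined on $\sH$ (this is noted just before the lemma), so that its $[\cdot,\cdot]$-adjoint $J_{\mathcal{M}}^+$ is also bounded and everywhere defined, and hence $\cC=\tfrac12(J_{\mathcal{M}}+J_{\mathcal{M}}^+)$ is a bounded (in fact $J$-self-adjoint) operator. In particular the right-hand side $[\cC\cdot,\cdot]$ of \eqref{fr14} is a well-defined Hermitian sesquilinear form, so it suffices to check that it agrees with $(\cdot,\cdot)_1$ on all pairs $f,g\in\sH$.

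Next I would expand, for $f = f_{\mathcal{M}_+} + f_{\mathcal{M}_-}$ and $g = g_{\mathcal{M}_+} + g_{\mathcal{M}_-}$,
\[
[J_{\mathcal{M}}f,\,g] = [\,f_{\mathcal{M}_+} - f_{\mathcal{M}_-},\ g_{\mathcal{M}_+} + g_{\mathcal{M}_-}\,], \qquad [f,\,J_{\mathcal{M}}g] = [\,f_{\mathcal{M}_+} + f_{\mathcal{M}_-},\ g_{\mathcal{M}_+} - g_{\mathcal{M}_-}\,],
\]
by sesquilinearity into four terms each. Adding these two expressions, the two ``diagonal'' contributions combine to $2\big([f_{\mathcal{M}_+},g_{\mathcal{M}_+}] - [f_{\mathcal{M}_-},g_{\mathcal{M}_-}]\big) = 2(f,g)_1$, while the ``off-diagonal'' terms, which involve only $[f_{\mathcal{M}_+},g_{\mathcal{M}_-}]$ and $[f_{\mathcal{M}_-},g_{\mathcal{M}_+}]$, cancel pairwise. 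Thus $[J_{\mathcal{M}}f,g] + [f,J_{\mathcal{M}}g] = 2(f,g)_1$; note that this cancellation is automatic and does not require $\mathcal{M}_+$ and $\mathcal{M}_-$ to be $J$-orthogonal.

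Finally I would rewrite $[f,J_{\mathcal{M}}g]$ via the adjoint: using the Hermitian symmetry of $[\cdot,\cdot]$ and the identity $[J_{\mathcal{M}}h,g] = [h,J_{\mathcal{M}}^+g]$, one gets $[f,J_{\mathcal{M}}g] = \overline{[J_{\mathcal{M}}g,f]} = \overline{[g,J_{\mathcal{M}}^+f]} = [J_{\mathcal{M}}^+f,g]$. Substituting into the identity of the previous step yields $2(f,g)_1 = [J_{\mathcal{M}}f,g] + [J_{\mathcal{M}}^+f,g] = [(J_{\mathcal{M}} + J_{\mathcal{M}}^+)f,\,g] = 2[\cC f,g]$, which is \eqref{fr14}. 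There is no genuine obstacle in this argument; the only points needing a little care are the correct bookkeeping of complex conjugation when passing through Hermitian symmetry to the adjoint, and the (easy but worth stating) observation that the off-diagonal terms cancel identically, so that no uniform definiteness or $J$-orthogonality assumption enters this particular identity.
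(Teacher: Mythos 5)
Your proof is correct and follows essentially the same route as the paper: both reduce the claim to the identity $2(f,g)_1=[J_{\mathcal{M}}f,g]+[f,J_{\mathcal{M}}g]$ (the paper via the projections $\tfrac12(I\pm J_{\mathcal{M}})$, you via direct expansion in the components $f_{\mathcal{M}_\pm}$, which is the same computation) and then convert $[f,J_{\mathcal{M}}g]$ into $[J_{\mathcal{M}}^+f,g]$ using the definition of the $[\cdot,\cdot]$-adjoint. Your explicit remarks on the cancellation of the off-diagonal terms and on the conjugation bookkeeping are accurate and consistent with the paper's argument.
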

\begin{proof}
It follows from \eqref{fr10b}, \eqref{fr9b} that, for all $f,g\in\sH$,
\begin{eqnarray*}
(f,g)_1=\frac{1}{4}([(I+J_{\mathcal{M}})f, (I+J_{\mathcal{M}})g]-[(I-J_{\mathcal{M}})f, (I-J_{\mathcal{M}})g]) & & = \\
\frac{1}{2}([J_{\mathcal{M}}f, g]+[f, J_{\mathcal{M}}g])=\frac{1}{2}[(J_{\mathcal{M}}+J_{\mathcal{M}}^+)f,g] & &
\end{eqnarray*}
that completes the proof.
\end{proof}

Relation  \eqref{fr14} implies  that  $\cC$ is a bounded, $J$-self-adjoint,  and $J$-positive operator in the Krein space
$(\sH, [\cdot,\cdot])$.  Moreover,  $0\in\rho(\cC)$.

{\bf II.} Assume that $\mathcal{M}_{\pm}$ are maximal definite but at least one of
these subspaces loses the property of being uniformly definite. Then the direct sum
\begin{equation}\label{sas12}
\mathcal{D}=\mathcal{M}_{+}\dot{+}\mathcal{M}_{-}
\end{equation}
does not coincide with $\sH$ and $\mathcal{D}$ is a dense set in the Hilbert space $(\sH, (\cdot,\cdot))$.

Similarly to the case ${\bf I}$,  the direct sum \eqref{sas12} generates by the formula \eqref{fr9b}
 a new inner product $(\cdot,\cdot)_1$ defined on $\mathcal{D}$. In contrast to the previous case, the inner product
 $(\cdot,\cdot)_1$ \emph{is not equivalent} to the initial product $(\cdot,\cdot)$ and the linear space
$\mathcal{D}$ endowed with $(\cdot,\cdot)_1$ is a pre-Hilbert space.
Let the Hilbert space $(\widehat{\sH}, (\cdot,\cdot)_1)$  be the completion of $\mathcal{D}$ with respect to $(\cdot,\cdot)_1$.
By construction, it can be decomposed:
\begin{equation}\label{fr30}
\widehat{\sH}=\widehat{\mathcal{M}}_{+}\oplus_1\widehat{\mathcal{M}}_{-},
\end{equation}
where the subspaces $\widehat{\mathcal{M}}_{\pm}$ of $\widehat{\sH}$
are the completion of the subspaces ${\mathcal{M}}_{\pm}$ and $\oplus_1$ indicates the orthogonality of $\widehat{\mathcal{M}}_{\pm}$ with respect to $(\cdot,\cdot)_1$.

The Krein space structure of  $\widehat{\sH}$ can be introduced by \eqref{fr30}.
Considering \eqref{fr30} as the fundamental decomposition of $\widehat{\sH}$, we define the new indefinite inner
product
\begin{equation}\label{fr15}
[f,g]_1:=[f_{\widehat{\mathcal{M}}_{+}}, g_{\widehat{\mathcal{M}}_{+}}]+[f_{\widehat{\mathcal{M}}_{-}}, g_{\widehat{\mathcal{M}}_{-}}],  \quad f,g\in\widehat{\sH}.
\end{equation}

The Hilbert space associated with the Krein space $(\widehat{\sH}, [\cdot,\cdot]_1)$ coincides with
$(\widehat{\sH}, (\cdot,\cdot)_1)$. The corresponding operator of fundamental symmetry
$J_{\widehat{\mathcal{M}}}$  is the closure in $(\widehat{\sH}, (\cdot,\cdot)_1)$ of the operator $J_{\mathcal{M}}$ defined by \eqref{fr10b}.
The operator $J_{\widehat{\mathcal{M}}}$  is self-adjoint in $(\widehat{\sH}, [\cdot,\cdot]_1)$
and  $(\cdot, \cdot)_1=[J_{\widehat{\mathcal{M}}}\cdot,\cdot]_1$.

On the other hand, the operator $J_{\mathcal{M}}$  defined by \eqref{fr10b} is a closed unbounded operator in $(\sH, (\cdot, \cdot))$.
The adjoint $J_{\mathcal{M}}^+$ of $J_{\mathcal{M}}$ with respect to $[\cdot, \cdot]$ in $\mathfrak{H}$
has the domain ${\mathcal D}(J_{\mathcal{M}}^+)=\mathcal{M}_-^{[\perp]}\dot{+}\mathcal{M}_+^{[\perp]}$
and its action is defined similarly to \eqref{fr10b}, where the maximal definite subspaces  $\mathcal{M}_\mp^{[\perp]}$
are used instead of $\mathcal{M}_\pm$.  In general, the domains ${\mathcal D}(J_{\mathcal{M}})$
and ${\mathcal D}(J_{\mathcal{M}}^+)$ are not coincide.  Therefore, the operator
$\cC$ in \eqref{fr14} can only be defined on $\mathcal{D}_0={\mathcal D}(J_{\mathcal{M}})\cap{\mathcal D}(J_{\mathcal{M}}^+)$.
By analogy with the proof of Lemma \ref{lem2}, we decide that
\begin{equation}\label{fr59}
(f,g)_1=[\cC{f}, g],  \qquad \forall{f\in\mathcal{D}_0}, \quad \forall{g}\in{\mathcal D}.
\end{equation}

{\bf III.} For the important particular case where $\mathcal{M}_\pm$ are \emph{$J$-orthogonal},
the subspace $\mathcal{D}_0$ coincides with  $\mathcal{D}$  and the operator $\cC$ is $J$-self-adjoint.
Moreover, $\cC$ coincides  with $J_{\mathcal{M}}$ and hence, it
is characterized by the additional condition $\cC^2f=f$ for all $f\in\mathcal{D}(\cC)=\mathcal{D}(J_{\mathcal{M}})$.
In this  case,  the operator $\cC$  can be presented as $\cC=Je^Q$, where $Q$  is an
unbounded self-adjoint  operator in $(\sH, (\cdot,\cdot))$  anticommuting with $J$ \cite[Theorem 2.1]{Sud}.
It follows from above that $e^Q=J\cC=JJ_{\mathcal{M}}$.  Therefore, $e^{Q/2}=\sqrt{JJ_{\mathcal{M}}}$.

The operator $Q$ characterizes the `deviation' of subspaces $\mathcal{M}_\pm$ with respect to the subspaces $\sH_\pm$ in the fundamental decomposition
\eqref{fr7}.  Precisely \cite{Sud},
\begin{equation}\label{fr101}
\mathcal{M}_+=(I-\tanh\frac{Q}{2})\sH_+,  \qquad  \mathcal{M}_-=(I-\tanh\frac{Q}{2})\sH_-.
\end{equation}

The modulus of a self-adjoint contraction operator $\tanh{Q}/{2}$ can be related with operator angles
$\Theta(\sH_\pm, \mathcal{M}_\pm)$  between $\sH_{\pm}$  and  $\mathcal{M}_{\pm}$,
\cite[Section 2.3]{Sud}:
$$
|\tanh\frac{Q}{2}|=\tan\Theta(\sH_+, \mathcal{M}_+)P_+ + \tan\Theta(\sH_-, \mathcal{M}_-)P_-.
$$
Here, the operator angles $\Theta(\sH_{\pm},  \mathcal{M}_\pm)$  are determined as follows:
$$
\Theta(\sH_\pm, \mathcal{M}_\pm):=\arcsin\sqrt{I - P_{\pm}P_{\mathcal{M}_\pm}}\upharpoonright_{\sH_{\pm}},
$$
where $P_{\pm}$, $P_{\mathcal{M}_{\pm}}$ are orthogonal projections in $\sH$
onto $\sH_{\pm}$  and  $\mathcal{M_{\pm}}$, respectively.

Let $\mathfrak{D}$ be the \emph{energetic space} constructed by the self-adjoint operator $e^Q$.  In other words,
$\mathfrak{D}$ denotes the completion of $\mathcal{D}=\mathcal{D}(\cC)=\mathcal{D}(e^Q)$  with respect to the energetic norm
$$
\|f\|^2_{en}:=\|f\|^2+\|f\|_1^2=\|f\|^2+[{\cC}f, f]=\|f\|^2+({e^Q}f, f),  \qquad  f\in\mathcal{D}.
$$
The energetic space $\mathfrak{D}$ coincides with $\mathcal{D}(e^{Q/2})$ and it is a Hilbert space
$(\mathfrak{D}, (\cdot,\cdot)_{en})$
with respect to the energetic scalar product $
(\cdot, \cdot)_{en}=(\cdot, \cdot)+(e^{Q/2}\cdot, e^{Q/2}\cdot)$. 

Sometimes, in the sequel, we will consider $\mathfrak{D}$ as the set of elements (without topology).
In this context, the term \emph{energetic linear manifold} will be used.

Comparing the definitions of $\widehat{\sH}$ and $\mathfrak{D}$ leads to the conclusion that  \emph{the energetic linear manifold  $\mathfrak{D}$ coincides
with the common part of $\sH$ and $\widehat{\sH}$,}  i.e.,   $\mathfrak{D}=\sH\cap\widehat{\sH}$.  Obviously, the formula \eqref{fr59} can be extended
onto $\mathfrak{D}$ as follows:
\begin{equation}\label{fr59b}
(f,g)_1=(e^{Q/2}{f}, e^{Q/2}g),  \qquad \forall{f, g}\in{\mathfrak D}.
\end{equation}

The Hilbert space $\widehat{\sH}$ cannot be interpreted as a Hilbert space $\sH_W$ with $W$-metric considered in \cite[Section 2.1]{new3}.  Indeed, 
if  $\widehat{\sH}=\sH_W$ for some self-adjoint $W$, then $\mathfrak{D}=\sH$ (because $\sH_W$ is the completion of $\sH$) 
that is impossible since $e^{Q/2}$ is an unbounded operator.  

\begin{lemma}\label{agga20}
If $\mathcal{M}_\pm$ are $J$-orthogonal, then the indefinite inner products
$[\cdot, \cdot]$  and $[\cdot, \cdot]_1$ coincide on the energetic linear manifold $\mathfrak{D}$.
In the Hilbert space $(\widehat{\sH}, (\cdot,\cdot)_1)$,  the indefinite inner product $[\cdot, \cdot]$
(defined originally on $\mathfrak{D}$) can be continuously extended onto $\widehat{\sH}$ and this extension coincides with
  $[\cdot, \cdot]_1$.
\end{lemma}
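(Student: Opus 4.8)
The plan is to exploit the decomposition $\mathfrak{D}=\sH\cap\widehat{\sH}$ together with the two fundamental decompositions: the original one $\widehat{\sH}=\widehat{\mathcal{M}}_+\oplus_1\widehat{\mathcal{M}}_-$ from \eqref{fr30} and the way $[\cdot,\cdot]_1$ is defined on it in \eqref{fr15}. First I would take $f,g\in\mathfrak{D}$ and write $f=f_{\mathcal{M}_+}+f_{\mathcal{M}_-}$, $g=g_{\mathcal{M}_+}+g_{\mathcal{M}_-}$ according to the direct sum \eqref{sas12}. Since $\mathcal{M}_\pm$ are assumed $J$-orthogonal, the cross terms $[f_{\mathcal{M}_+},g_{\mathcal{M}_-}]$ and $[f_{\mathcal{M}_-},g_{\mathcal{M}_+}]$ vanish, so $[f,g]=[f_{\mathcal{M}_+},g_{\mathcal{M}_+}]+[f_{\mathcal{M}_-},g_{\mathcal{M}_-}]$. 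Comparing with \eqref{fr15}, where the components are taken in $\widehat{\mathcal{M}}_\pm$, and noting that for $f\in\mathcal{D}$ the $\widehat{\mathcal{M}}_\pm$-components coincide with the $\mathcal{M}_\pm$-components, we get $[f,g]=[f,g]_1$ on $\mathcal{D}$. Extending to $\mathfrak{D}$ is then a matter of density and continuity of $[\cdot,\cdot]$ restricted to $\mathfrak{D}$ in the energetic norm — or, more directly, one can compute with the formulas $|[f_{\mathcal{M}_\pm},f_{\mathcal{M}_\pm}]|=\|e^{Q/2}f_{\mathcal{M}_\pm}\|^2$ coming from \eqref{fr59b} and note both indefinite forms are given by the same expression in terms of $e^{Q/2}$.

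For the second assertion, the idea is to establish that $[\cdot,\cdot]$, as a sesquilinear form on $\mathfrak{D}$, is bounded with respect to the norm $\|\cdot\|_1=\sqrt{(\cdot,\cdot)_1}$, hence extends continuously to the completion $\widehat{\sH}$. Here I would use that in the Krein space $(\widehat{\sH},[\cdot,\cdot]_1)$ the fundamental symmetry $J_{\widehat{\mathcal{M}}}$ satisfies $(\cdot,\cdot)_1=[J_{\widehat{\mathcal{M}}}\cdot,\cdot]_1$ with $J_{\widehat{\mathcal{M}}}$ bounded (it is unitary and self-adjoint in $(\widehat{\sH},(\cdot,\cdot)_1)$). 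Therefore $|[f,g]_1|\le\|f\|_1\|g\|_1$ for all $f,g\in\widehat{\sH}$, which gives the continuity of $[\cdot,\cdot]_1$; combined with the first part of the lemma, $[\cdot,\cdot]$ on $\mathfrak{D}$ is the restriction of the continuous form $[\cdot,\cdot]_1$, so its continuous extension is exactly $[\cdot,\cdot]_1$. Uniqueness of continuous extension (since $\mathfrak{D}$ is dense in $\widehat{\sH}$) finishes the argument.

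The main obstacle I anticipate is the bookkeeping around the two different notions of ``component of $f$'': in \eqref{sas12} the splitting $f=f_{\mathcal{M}_+}+f_{\mathcal{M}_-}$ happens inside $\sH$, whereas in \eqref{fr15} it happens inside $\widehat{\sH}$. One must argue that for $f\in\mathcal{D}$ these agree — this follows because $\mathcal{D}=\mathcal{M}_+\dot+\mathcal{M}_-$ is dense in $\widehat{\mathcal{H}}$ with $\mathcal{M}_\pm$ sitting densely in $\widehat{\mathcal{M}}_\pm$, and the projections onto $\widehat{\mathcal{M}}_\pm$ restrict to the algebraic projections onto $\mathcal{M}_\pm$ on $\mathcal{D}$. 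A second, milder subtlety is that $[\cdot,\cdot]$ is \emph{not} bounded with respect to the original norm $\|\cdot\|$, so the extension genuinely lives on $\widehat{\sH}$, not on $\sH$; one should be careful to phrase continuity with respect to $\|\cdot\|_1$ throughout. Once the identification of components is in place, both parts of the lemma are short computations.
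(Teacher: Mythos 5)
Your proposal is correct and follows essentially the same route as the paper: the $J$-orthogonality kills the cross terms so that $[f,g]=[f,g]_1$ on $\mathcal{D}$, and the identity extends to $\mathfrak{D}$ and then to $\widehat{\sH}$ by continuity of $[\cdot,\cdot]$ with respect to $\|\cdot\|$ and of $[\cdot,\cdot]_1$ with respect to $\|\cdot\|_1$, using the density of $\mathcal{D}$. Your extra care about identifying the $\mathcal{M}_\pm$-components with the $\widehat{\mathcal{M}}_\pm$-components on $\mathcal{D}$ only makes explicit a point the paper leaves implicit.
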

\begin{proof}
By virtue of  \eqref{fr15}  and the $J$-orthogonality of $\mathcal{M}_\pm$,
$$
[f,g]=[f_{\mathcal{ M}_+}+f_{\mathcal{ M}_-},  g_{\mathcal{ M}_+}+g_{\mathcal{ M}_-}]=[f_{\mathcal{ M}_+}, g_{\mathcal{ M}_+}]+[f_{\mathcal{ M}_-},g_{\mathcal{ M}_-}]=[f, g]_1, \ \forall f,g\in\mathcal{D}.
$$
The obtained relation are extended onto $\mathfrak{D}$ by the continuity because $|[f,f]|\leq\|f\|^2$ and
 $|[f,g]_1|\leq\|f\|^2_1$.
 The second statement of Lemma follows from the coincidence of $[\cdot,\cdot]$ and
 $[\cdot,\cdot]_1$ on $\mathfrak{D}$ and the definition of $(\widehat{\sH}, (\cdot,\cdot)_1)$.
\end{proof}

\section{Frames in Krein spaces.  Reconstruction formulas}
Since the subspaces $\mathcal{M}_{\pm}$  in Definition \ref{def3} are assumed to be maximal definite, the
 linear manifold $\mathcal{D}$ in \eqref{sas12} is a dense set in  $(\sH, (\cdot,\cdot))$.
Properties of the corresponding $J$-frame $\cF$ depend
on the fact does $\mathcal{D}$ coincide with $\sH$ or not.

\subsection{The set $\mathcal{D}$  coincides with $\sH$.}\label{3.1}
\begin{proposition}\label{prop1}
Let $\cF$ be a $J$-frame in the sense of Definition \ref{def3} and $\mathcal{D}=\sH$.
Then $\cF$ is a conventional frame in the Hilbert space $(\sH, (\cdot,\cdot)_1)$ with the same frame bounds
$A\leq B$. Moreover, $\cF$ is $J$-frame  in the sense of Definition \ref{def2}.
\end{proposition}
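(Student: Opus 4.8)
The plan is to turn the pair of indefinite inequalities \eqref{fr5b} into a single conventional frame inequality for the auxiliary Hilbert space $(\sH,(\cdot,\cdot)_1)$, and then to read off from it the conditions of Definition~\ref{def2}. Recall from the discussion preceding the statement that, since $\mathcal{D}=\mathcal{M}_+\dot{+}\mathcal{M}_-=\sH$, the subspaces $\mathcal{M}_\pm$ are uniformly definite; hence the form $(\cdot,\cdot)_1$ of \eqref{fr9b} is defined on the whole of $\sH$ and is equivalent to $(\cdot,\cdot)$, $(\sH,(\cdot,\cdot)_1)$ is a Hilbert space, and \eqref{sas12b} is its $(\cdot,\cdot)_1$-orthogonal fundamental decomposition $\sH=\mathcal{M}_+\oplus_1\mathcal{M}_-$. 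Two elementary identities will be used throughout: $(g,h)_1=[g,h]$ for $g,h\in\mathcal{M}_+$, $(g,h)_1=-[g,h]$ for $g,h\in\mathcal{M}_-$, and in particular $(f,f)_1=|[f,f]|$ whenever $f\in\mathcal{M}_+$ or $f\in\mathcal{M}_-$.

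A preliminary step is to extend \eqref{fr5b} from $\mbox{span}\{\cF_\pm\}$ to all of $\mathcal{M}_\pm$. Because $\mathcal{M}_\pm$ is uniformly definite, $\pm[\cdot,\cdot]$ is a Hilbert-space inner product on $\mathcal{M}_\pm$ equivalent to $(\cdot,\cdot)$, and $\mbox{span}\{\cF_\pm\}$ is dense in $(\mathcal{M}_\pm,\pm[\cdot,\cdot])$ by \eqref{fr101b}. The upper bound in \eqref{fr5b} says that the analysis map $f\mapsto\{[f,f_n]\}_{n\in\mathbb{N}_\pm}$ is bounded, by $\sqrt{B}$, from $\mbox{span}\{\cF_\pm\}$ into $l_2(\mathbb{N}_\pm)$, so it extends by continuity to $\mathcal{M}_\pm$; since $f\mapsto\sum_{n\in\mathbb{N}_\pm}|[f,f_n]|^2$ and $f\mapsto|[f,f]|$ are continuous on $\mathcal{M}_\pm$ as well, both inequalities in \eqref{fr5b} persist for every $f\in\mathcal{M}_\pm$.

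For the main step, fix $f\in\sH$ and write $f=f_{\mathcal{M}_+}+f_{\mathcal{M}_-}$ according to \eqref{sas12b}. If $n\in\mathbb{N}_\pm$ then $f_n\in\mathcal{M}_\pm$, so by the $(\cdot,\cdot)_1$-orthogonality of $\mathcal{M}_+$ and $\mathcal{M}_-$ and the identities above $(f,f_n)_1=(f_{\mathcal{M}_\pm},f_n)_1=\pm[f_{\mathcal{M}_\pm},f_n]$, whence $\sum_{n\in\mathbb{N}_\pm}|(f,f_n)_1|^2=\sum_{n\in\mathbb{N}_\pm}|[f_{\mathcal{M}_\pm},f_n]|^2$. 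Applying the extended \eqref{fr5b} to $f_{\mathcal{M}_\pm}\in\mathcal{M}_\pm$ and using $|[f_{\mathcal{M}_\pm},f_{\mathcal{M}_\pm}]|=(f_{\mathcal{M}_\pm},f_{\mathcal{M}_\pm})_1$ gives
\[
A\,(f_{\mathcal{M}_\pm},f_{\mathcal{M}_\pm})_1\ \le\ \sum_{n\in\mathbb{N}_\pm}|(f,f_n)_1|^2\ \le\ B\,(f_{\mathcal{M}_\pm},f_{\mathcal{M}_\pm})_1 .
\]
Adding the inequality for $\mathbb{N}_+$ to that for $\mathbb{N}_-$ and using $\|f\|_1^2=(f_{\mathcal{M}_+},f_{\mathcal{M}_+})_1+(f_{\mathcal{M}_-},f_{\mathcal{M}_-})_1$ (orthogonality in $\sH=\mathcal{M}_+\oplus_1\mathcal{M}_-$) yields $A\|f\|_1^2\le\sum_{n\in\mathbb{N}}|(f,f_n)_1|^2\le B\|f\|_1^2$ for all $f\in\sH$, i.e.\ $\cF$ is a conventional frame in $(\sH,(\cdot,\cdot)_1)$ with bounds $A\le B$. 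For the remaining claim, the equivalence of $(\cdot,\cdot)_1$ and $(\cdot,\cdot)$ makes $\cF$ a conventional frame in $(\sH,(\cdot,\cdot))$ as well, so $\cF$ is a Bessel sequence and the operators $T_\pm$ of Definition~\ref{def2} are well defined and bounded, with $\mathcal{R}(T_\pm)\subseteq\mathcal{M}_\pm$. The extended inequality, restricted to $\mathcal{M}_\pm$, says that $\cF_\pm$ is a frame of the Hilbert space $(\mathcal{M}_\pm,\pm[\cdot,\cdot])$, hence its synthesis operator, which coincides with $T_\pm$ on $l_2(\mathbb{N}_\pm)$, maps onto $\mathcal{M}_\pm$; thus $\mathcal{R}(T_\pm)=\mathcal{M}_\pm$, which is maximal definite by hypothesis and uniformly definite by the first paragraph. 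Therefore $\mathcal{R}(T_+)$ and $\mathcal{R}(T_-)$ are, respectively, maximal uniformly positive and maximal uniformly negative, which is exactly Definition~\ref{def2}.

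I expect the most delicate point to be the continuous extension of \eqref{fr5b} from $\mbox{span}\{\cF_\pm\}$ to $\mathcal{M}_\pm$: it uses that $\mathcal{M}_\pm$ is \emph{uniformly} definite, so that $(\mathcal{M}_\pm,\pm[\cdot,\cdot])$ is complete with $\mbox{span}\{\cF_\pm\}$ dense in it and the analysis map bounded there. Once this is in hand, the rest is bookkeeping with the decomposition $\sH=\mathcal{M}_+\oplus_1\mathcal{M}_-$ and the relations $(\cdot,\cdot)_1=\pm[\cdot,\cdot]$ on $\mathcal{M}_\pm$.
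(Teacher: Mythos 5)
Your proof is correct and follows essentially the same route as the paper's: rewrite \eqref{fr5b} as the single inequality \eqref{agga22} using the $(\cdot,\cdot)_1$-orthogonal decomposition $\sH=\mathcal{M}_+\oplus_1\mathcal{M}_-$ and the identities $(\cdot,\cdot)_1=\pm[\cdot,\cdot]$ on $\mathcal{M}_\pm$, then extend from a dense set to all of $\sH$ (the paper invokes \cite[Lemma 5.1.7]{Ole} where you spell out the continuity argument). Your explicit verification that $\mathcal{R}(T_\pm)=\mathcal{M}_\pm$ via surjectivity of the synthesis operators of the frames $\cF_\pm$ in $(\mathcal{M}_\pm,\pm[\cdot,\cdot])$ supplies a detail the paper leaves implicit in its one-line justification of the ``Moreover'' claim, but it is the same argument in substance.
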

\begin{proof}
If $\mathcal{D}=\sH$, then the subspaces $\mathcal{M}_{\pm}$  are maximal uniformly definite in the Krein space $(\sH, [\cdot,\cdot])$ and \eqref{sas12} coincides with \eqref{sas12b}.
The decomposition \eqref{sas12b} determines the new inner product  $(\cdot,\cdot)_1$ on $\sH$  which is equivalent to
 $(\cdot,\cdot)$.

 By the construction, $|[f,g]|=|(f, g)_1|$ where $f,g \in\mathcal{M}_{+}$ or $f,g \in\mathcal{M}_{-}$.
 Moreover, the subspaces $\mathcal{M}_\pm$ are orthogonal with respect to $(\cdot, \cdot)_1$. Therefore, 
 the inequalities \eqref{fr5b}  can be rewritten as
\begin{equation}\label{agga22}
A\|f\|^2_1\leq \sum_{n\in\mathbb{N}}{|(f,f_n)_1|^2}\leq B\|f\|^2_1, \quad
 \forall{f}\in\mbox{span}\{\cF\}.
\end{equation}
where $\mbox{span}\{\cF\}=\mbox{span}\{\cF_-\}\dot{+}\mbox{span}\{\cF_+\}$ is a dense set  in $\sH$.
The inequality \eqref{agga22} holds true for any $f\in\sH$
due to \cite[Lemma 5.1.7]{Ole}. Therefore, $\cF$ is a frame
in the Hilbert space $(\sH, (\cdot,\cdot)_1)$ with the same frame bounds $A\leq B$.

The concept of  $J$-frame defined in Definition \ref{def2}
corresponds to the case where $\mathcal{M}_{\pm}$ are maximal uniformly definite subspaces.
Therefore,  the $J$-frame $\cF$ considered above is also a $J$-frame in the sense of Definition
\ref{def2}.
\end{proof}

By  Proposition \ref{prop1}, the synthesis operator $T : l_2(\mathbb{N}) \to \sH$ associated to $J$-frame $\cF$ is well defined.
Denote by $T^\dag$ its adjoint as an operator mapping of the Krein space $(\sH, [\cdot,\cdot])$ into
the Hilbert space $l_2(\mathbb{N})$ i.e., $[T\{c_n\}, f]=(\{c_n\}, T^\dag{f})_{l_2(\mathbb{N})}$, where $\{c_n\}\in{l_2(\mathbb{N})}$,
$f\in\sH$. It is easy to verify that  $T^\dag{f}=\{[f,f_n]\}$.

The operator $S : \sH \to \sH$
\begin{equation}\label{fr17}
Sf=TT^\dag{f}=\sum_{n=1}^\infty[f,f_n]f_n
\end{equation}
is called \emph{a $J$-frame operator associated to $J$-frame} $\cF$.
\begin{proposition}\label{fr37}
The $J$-frame operator $S$ is a  $J$-positive, $J$-self-adjoint bounded operator in the Krein space
$(\sH, [\cdot,\cdot])$ and $0\in\rho(S)$. The reconstruction formula holds
\begin{equation}\label{fr19}
f=\sum_{n=1}^\infty[f,S^{-1}f_n]f_n=\sum_{n=1}^\infty{[f,f_n]S^{-1}f_n},
\end{equation}
where the series converge in the Hilbert space $\sH$.
\end{proposition}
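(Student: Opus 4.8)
The plan is to transfer the statement to the Hilbert space $(\sH, (\cdot,\cdot)_1)$, where it becomes the standard theory of the Hilbert-space frame operator, and then translate back. By Proposition \ref{prop1}, $\cF$ is a conventional frame in $(\sH, (\cdot,\cdot)_1)$ with bounds $A \leq B$, so its ordinary frame operator $S_1 f := \sum_n (f,f_n)_1 f_n$ is a bounded, positive, boundedly invertible self-adjoint operator in $(\sH, (\cdot,\cdot)_1)$, and one has the usual reconstruction $f = \sum_n (f, S_1^{-1} f_n)_1 f_n = \sum_n (f,f_n)_1 S_1^{-1} f_n$ with convergence in the $(\cdot,\cdot)_1$-norm. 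Since $(\cdot,\cdot)_1$ is equivalent to $(\cdot,\cdot)$ (case $\mathbf{I}$ of Section \ref{2.2}), convergence in $\sH$ is automatic.

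Next I would identify $S$ with $S_1$. From $T^\dag f = \{[f,f_n]\}$ and Lemma \ref{lem2}, which gives $(\cdot,\cdot)_1 = [\cC \cdot, \cdot]$ with $\cC$ bounded, $J$-self-adjoint, $J$-positive and $0 \in \rho(\cC)$, one sees that $[f,f_n] = (\cC^{-1} f, f_n)_1$; hence $S f = \sum_n [f,f_n] f_n = \sum_n (\cC^{-1}f, f_n)_1 f_n = S_1 \cC^{-1} f$, so $S = S_1 \cC^{-1}$. Now $S_1$ is self-adjoint and positive in $(\sH, (\cdot,\cdot)_1) = (\sH, [\cC\cdot,\cdot])$, which is precisely the statement that $\cC^{-1} S_1$ is $J$-self-adjoint and $J$-positive (using $[\cC \cC^{-1} S_1 f, g] = (S_1 f, g)_1$ and the self-adjointness of $S_1$ with respect to $(\cdot,\cdot)_1$); but $\cC^{-1} S_1 = \cC^{-1} S \cC$ and $S = \cC \,(\cC^{-1} S_1)\, \cC^{-1}$ — actually it is cleaner to compute $[S f, g]$ directly: $[Sf, g] = \sum_n [f,f_n]\overline{[g,f_n]}$ is visibly Hermitian and nonnegative, vanishing only if $T^\dag g = 0$, i.e. $g \perp_1 \mathrm{span}\{\cF\}$, i.e. $g = 0$. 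Thus $S$ is $J$-self-adjoint and $J$-positive; boundedness is clear from $S = TT^\dag$ with $T$ bounded; and $0 \in \rho(S)$ follows because $S = S_1\cC^{-1}$ is a product of two boundedly invertible operators. For the reconstruction formula \eqref{fr19}, I would start from the Hilbert-space identity for $S_1$ and substitute $(f, S_1^{-1}f_n)_1 = [\cC^{-1} S_1^{-1} f, f_n] = [S^{-1} f, f_n]$ (since $S^{-1} = \cC S_1^{-1}$ gives $\cC^{-1} S^{-1} = S_1^{-1}$, wait — rather $S^{-1} = \cC S_1^{-1}$ so $\cC^{-1} S_1^{-1}$ is not what appears; instead use $S_1^{-1} = \cC^{-1} S^{-1}$ is false, so I recompute: from $S = S_1 \cC^{-1}$ we get $S^{-1} = \cC S_1^{-1}$, hence $[S^{-1}f, f_n] = [\cC S_1^{-1} f, f_n] = (S_1^{-1} f, f_n)_1$, exactly matching the first sum), and similarly $(f,f_n)_1 S_1^{-1} f_n = [\cC^{-1}f, f_n]\,\cC S_1^{-1} f_n$; to land on $[f,f_n] S^{-1} f_n$ I instead use the other Hilbert-space form $f = \sum_n (f, f_n)_1 S_1^{-1} f_n$ together with $(f,f_n)_1 = [\cC^{-1}f, f_n]$ and self-adjointness to move $\cC^{-1}$, or more directly replace $f$ by $\cC^{-1} f$ in $S_1^{-1} S_1 = I$ — the bookkeeping here needs care but is routine once $S = S_1 \cC^{-1}$ is in hand.

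The main obstacle is therefore purely notational: keeping straight the relation between $S$ (defined via the indefinite inner product) and the honest Hilbert-space frame operator $S_1$ of $\cF$ in $(\sH, (\cdot,\cdot)_1)$, and making sure the two forms of the reconstruction formula come out with $S^{-1}$ rather than some conjugate of it. Everything else — self-adjointness, positivity, boundedness, invertibility — is immediate from $S = TT^\dag$, from $S = S_1 \cC^{-1}$, and from the standard frame-operator facts in a Hilbert space, which may be quoted from \cite{Ole}; the equivalence of $(\cdot,\cdot)_1$ and $(\cdot,\cdot)$ in this case ($\mathcal{D} = \sH$) guarantees that all series converge in the original norm $\|\cdot\| = \sqrt{(\cdot,\cdot)}$ as claimed.
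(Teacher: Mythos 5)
Your proposal is correct and follows essentially the same route as the paper: pass to the frame operator $S_1$ of $\cF$ in $(\sH,(\cdot,\cdot)_1)$ via Proposition \ref{prop1}, use $(\cdot,\cdot)_1=[\cC\cdot,\cdot]$ to get $S_1=S\cC$ (your $S=S_1\cC^{-1}$), deduce the spectral and symmetry properties, and transform the Hilbert-space reconstruction formula, the second form being obtained exactly as in the paper by substituting $f\mapsto\cC^{-1}f$ (equivalently, applying $\cC$ and using $\mathcal{R}(\cC)=\sH$). Your direct computation $[Sf,f]=\sum_n|[f,f_n]|^2$ for $J$-positivity is a harmless (and slightly more explicit) variant of the paper's appeal to \eqref{fr20}.
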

\begin{proof}
By Proposition \ref{prop1}, the $J$-frame $\cF$ is a conventional frame in the Hilbert
space $(\sH, (\cdot,\cdot)_1)$. The corresponding frame operator $S_1$ has the form
\begin{equation}\label{fr41b}
S_1f=\sum_{n=1}^\infty(f,f_n)_{1}f_n
\end{equation}
and it is a positive self-adjoint operator in $(\sH, (\cdot,\cdot)_1)$ such that $0\in\rho(S_1)$.
By virtue of \eqref{fr14} and \eqref{fr17},
\begin{equation}\label{fr20}
S_{1}=S\cC, \qquad S^{-1}=\cC{S_1^{-1}}.
\end{equation}

It follows from \eqref{fr20} that  $S^{-1}$ and  $S$ are  $J$-positive, $J$-self-adjoint bounded operators in the Krein space
$(\sH, [\cdot,\cdot])$.
The reconstruction formula for the frame $\cF$ in the Hilbert space $(\sH, (\cdot,\cdot)_1)$  is:
\begin{equation}\label{fr61}
f=\sum_{n=1}^\infty{(f,S^{-1}_1f_n)_1f_n}=\sum_{n=1}^\infty{(f,f_n)_1S^{-1}_1f_n}
\end{equation}
and its first part is transformed to the first part of \eqref{fr19} since
$$
(f,S^{-1}_1f_n)_1=[\cC{f}, S^{-1}_1f_n]=[f, {\cC}S^{-1}_1f_n]=[f, S^{-1}f_n].
$$

Multiplying  \eqref{fr61} by  the operator $\cC$ and using the second relation in \eqref{fr20} we obtain
$$
\cC{f}=\sum_{n=1}^\infty{(f,f_n)_1S^{-1}f_n}=\sum_{n=1}^\infty{[\cC{f},f_n]_1S^{-1}f_n}
$$
that is equivalent to the second relation in \eqref{fr19} (since $\mathcal{R}(\cC)=\sH$). 
Thus, the series \eqref{fr19} are convergent in $\sH$ with respect to $(\cdot,\cdot)_1$. 
Obviously, they remain convergent with  respect to $(\cdot, \cdot)$ since the inner products
$(\cdot, \cdot)$ and $(\cdot, \cdot)_1$ are equivalent.
\end{proof}

\begin{remark}\label{rem1}  In \cite{Giribet2}, the Hilbert space $l_2(\mathbb{N})$
was considered as a Krein space with indefinite metric generated by the fundamental decomposition
 \eqref{fr28} and the adjoint $T^+$ of the synthesis operator $T$  was calculated as an operator
 acting between \emph{two Krein spaces $\sH$ and $l_2(\mathbb{N})$},  that is:
 $[T\{c_n\}, f]=[\{c_n\}, T^+f]_{l_2(\mathbb{N})}$, where  $[\{c_n\}, \{c_n\}]_{l_2(\mathbb{N})}=\sum_{\mathbb{N}}\sigma_n|c_n|^2$ and
 $\sigma_n=\mbox{sgn}[f_n,f_n]$. It is easy to check that
 $$
 T^+f=\{\sigma_n[f,f_n]\}.
 $$
 Then, the corresponding $J$-frame operator $\widetilde{S}=TT^+$ acts as, cf. \eqref{fr17}
$$
\widetilde{S}=\sum_{n=1}^\infty\sigma_n[f,f_n]f_n
$$
and its detailed investigation can be found in \cite{Giribet4}.
By virtue of \eqref{fr10b} and \eqref{fr17}, the $J$-frame operators $S$ and $\widetilde{S}$ are related as follows:
\begin{equation}\label{fr21}
J_{\mathcal{M}}S=\widetilde{S}, \qquad  SJ_{\mathcal{M}}^+=\widetilde{S}.
\end{equation}
Hence, $0\in\rho(\widetilde{S})$ and $\widetilde{S}$ is a $J$-self-adjoint operator in
$(\sH, [\cdot,\cdot])$ since $\widetilde{S}^+=(J_{\mathcal{M}}S)^+=SJ_{\mathcal{M}}^+=\widetilde{S}$.
However, in contract to $S$, the $J$-frame operator $\widetilde{S}$ cannot be  $J$-positive.
Indeed, $
[\widetilde{S}f, f]=-\sum_{n\in\mathbb{N}_-}|[f,f_n]|^2<0
$ for $f\in\mathcal{M}_+^{[\perp]}$.

By virtue of \eqref{fr21} the reconstruction formula \eqref{fr19} takes the form:
$$
f=\sum_{n=1}^\infty\sigma_n[f,\widetilde{S}^{-1}f_n]f_n=\sum_{n=1}^\infty\sigma_n[f,f_n]\widetilde{S}^{-1}f_n.
$$
\end{remark}

\subsection{The set $\mathcal{D}$ does not coincide with $\sH$.}
\begin{proposition}\label{prop3}
Let $\cF$ be a $J$-frame in the sense of Definition \ref{def3} and 
 $\mathcal{D}\not=\sH$.
Then  the  $J$-frame $\cF$ is a conventional frame in the new Hilbert space $(\widehat{\sH},  (\cdot,\cdot)_1)$ with the same frame bounds
$A\leq B$.
\end{proposition}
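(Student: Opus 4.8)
The plan is to follow the proof of Proposition~\ref{prop1} almost verbatim, the single but essential caveat being that $(\cdot,\cdot)_1$ is no longer equivalent to $(\cdot,\cdot)$, so every density and passage-to-the-limit step must now be carried out with respect to the norm $\|\cdot\|_1=\sqrt{(\cdot,\cdot)_1}$ of the completion $\widehat{\sH}$. First I would observe that $\cF$ is genuinely a family of vectors of $\widehat{\sH}$: each $f_n$ lies in $\mathcal{M}_+$ or in $\mathcal{M}_-$, hence in $\mathcal{D}=\mathcal{M}_+\dot{+}\mathcal{M}_-$, and $\mathcal{D}\subset\widehat{\sH}$ by \eqref{fr30}. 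Moreover $\mbox{span}\{\cF\}=\mbox{span}\{\cF_+\}\dot{+}\mbox{span}\{\cF_-\}$ (the sum is direct because \eqref{sas12} is), and this set is $\|\cdot\|_1$-dense in $\widehat{\sH}$: by \eqref{fr101b} the set $\mbox{span}\{\cF_\pm\}$ is $\|\cdot\|$-dense in $\mathcal{M}_\pm$, while on $\mathcal{M}_\pm$ one has $\|f\|_1^2=\pm[f,f]=\pm(Jf,f)\leq\|f\|^2$, so $\|\cdot\|$-density forces $\|\cdot\|_1$-density of $\mbox{span}\{\cF_\pm\}$ in $\mathcal{M}_\pm$, hence in $\widehat{\mathcal{M}}_\pm$; assembling the two summands via the orthogonal decomposition \eqref{fr30} gives density in $\widehat{\sH}$.

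Next I would rewrite the two one-sided inequalities \eqref{fr5b} as a single conventional frame inequality on $\mbox{span}\{\cF\}$. For $f\in\mbox{span}\{\cF_+\}\subset\mathcal{M}_+$ the definition \eqref{fr9b} yields $(f,g)_1=[f,g]$ for all $g\in\mathcal{M}_+$; in particular $|[f,f]|=[f,f]=\|f\|_1^2$ and $[f,f_n]=(f,f_n)_1$ for $n\in\mathbb{N}_+$, whereas $(f,f_n)_1=0$ for $n\in\mathbb{N}_-$ by the $(\cdot,\cdot)_1$-orthogonality of $\mathcal{M}_+$ and $\mathcal{M}_-$ built into \eqref{fr30}. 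Hence the first line of \eqref{fr5b} becomes $A\|f\|_1^2\leq\sum_{n\in\mathbb{N}}|(f,f_n)_1|^2\leq B\|f\|_1^2$ for every $f\in\mbox{span}\{\cF_+\}$, and symmetrically (using $(f,g)_1=-[f,g]$ on $\mathcal{M}_-$) the second line of \eqref{fr5b} gives the same inequality for every $f\in\mbox{span}\{\cF_-\}$. For an arbitrary $f=f_++f_-$ with $f_\pm\in\mbox{span}\{\cF_\pm\}$, orthogonality of $\mathcal{M}_\pm$ gives $\|f\|_1^2=\|f_+\|_1^2+\|f_-\|_1^2$ and $(f,f_n)_1=(f_+,f_n)_1$ for $n\in\mathbb{N}_+$, $(f,f_n)_1=(f_-,f_n)_1$ for $n\in\mathbb{N}_-$, so adding the two one-sided estimates produces exactly the inequality \eqref{agga22}, now valid on the $\|\cdot\|_1$-dense set $\mbox{span}\{\cF\}\subset\widehat{\sH}$.

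Finally I would extend \eqref{agga22} from $\mbox{span}\{\cF\}$ to all of $\widehat{\sH}$ exactly as in Proposition~\ref{prop1}, i.e. by \cite[Lemma 5.1.7]{Ole} applied in the Hilbert space $(\widehat{\sH},(\cdot,\cdot)_1)$ (the upper bound extends by Fatou's lemma, after which the lower bound extends by continuity of the associated frame operator); this shows that $\cF$ is a conventional frame of $(\widehat{\sH},(\cdot,\cdot)_1)$ with bounds $A\leq B$. I expect the only step requiring real care to be the $\|\cdot\|_1$-density of $\mbox{span}\{\cF\}$ in $\widehat{\sH}$: since $\|\cdot\|_1$ is strictly weaker than $\|\cdot\|$, one cannot merely invoke $\|\cdot\|$-density but must combine the domination $\|\cdot\|_1\leq\|\cdot\|$ on each $\mathcal{M}_\pm$ with the very definition of $\widehat{\mathcal{M}}_\pm$ as the $\|\cdot\|_1$-completion of $\mathcal{M}_\pm$; the remaining computations are a line-by-line repetition of the case $\mathcal{D}=\sH$ and involve no new difficulty.
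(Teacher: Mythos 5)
Your proposal is correct and follows essentially the same route as the paper, which simply says to rewrite \eqref{fr5b} as \eqref{agga22} on the $\|\cdot\|_1$-dense set $\mbox{span}\{\cF\}\subset\widehat{\sH}$ (as in Proposition \ref{prop1}) and then extend to $\widehat{\sH}$ via \cite[Lemma 5.1.7]{Ole}. You merely supply the details the paper leaves implicit, in particular the justification of $\|\cdot\|_1$-density through the domination $\|\cdot\|_1\leq\|\cdot\|$ on $\mathcal{M}_\pm$ and the decomposition \eqref{fr30}, which is exactly the right way to fill that gap.
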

\begin{proof}
If $\mathcal{D}\not=\sH$,  then at least one of subspaces $\mathcal{M}_{\pm}$
is maximal definite but no uniformly definite.  In this case, the
 direct sum \eqref{sas12} generates the new  Hilbert space $(\widehat{\sH}, (\cdot,\cdot)_1)$.
Similarly to the proof of Proposition \ref{prop1} we rewrite the inequalities \eqref{fr5b} as \eqref{agga22},  where
$\mbox{span}\{\cF\}$ is a dense set in $\widehat\sH$ and extend \eqref{agga22} onto $\widehat\sH$. 
Therefore,  $\cF$ is a frame in the Hilbert space $(\widehat{\sH}, (\cdot,\cdot)_1)$ with the same frame bounds $A\leq B$.
\end{proof}

By virtue of Proposition \ref{prop3}, the reconstruction formula \eqref{fr61} is true for all $f\in\widehat{\sH}$
(the series converge in the Hilbert space $(\widehat{\sH}, (\cdot,\cdot)_1)$).  However, in contrast to Section \ref{3.1},
we cannot transform \eqref{fr61} to  \eqref{fr19}.  The matter is that  the
inner products  $(\cdot, \cdot)$ and $(\cdot, \cdot)_1$ are not equivalent and we cannot express  $(\cdot, \cdot)_1$
via the indefinite inner product $[\cdot,\cdot]$ on $\widehat{\sH}$.
Nevertheless, if
$f\in\mathcal{D}_0={\mathcal D}(J_{\mathcal{M}})\cap{\mathcal D}(J_{\mathcal{M}}^+)$,
the formula \eqref{fr59} allows one to rewrite the coefficients $(f,f_n)_{1}$ of \eqref{fr61}  in terms of $[\cdot,\cdot]$:
$(f,f_n)_{1}=[\cC{f}, f_n].$ 

\subsection{The subspaces $\mathcal{M}_\pm$ are $J$-orthogonal.}
The above results are essentially simplified when the subspaces $\mathcal{M}_\pm$ are $J$-orthogonal.
\begin{proposition}\label{fr41}
Let $\cF$ satisfy the assumptions of Proposition \ref{prop3} and let the subspaces $\mathcal{M}_\pm$ be $J$-orthogonal.
Then, for all elements  $f$ from the energetic linear manifold $\mathfrak{D}$ (see Section \ref{2.2}),
\begin{equation}\label{fr71}
f=\sum_{n=1}^\infty{[f,f_n]S^{-1}f_n},
\end{equation}
where $Sf=\sum_{n=1}^\infty[f,f_n]f_n$ and the series converge in the Hilbert space $(\widehat{\sH}, (\cdot,\cdot)_1)$.
\end{proposition}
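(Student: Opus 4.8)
The statement to prove is the reconstruction formula \eqref{fr71} for elements $f$ of the energetic manifold $\mathfrak{D}$, in the $J$-orthogonal case. I would build it on top of Proposition \ref{prop3}, which already gives that $\cF$ is a conventional frame in the Hilbert space $(\widehat{\sH},(\cdot,\cdot)_1)$. Consequently the frame operator $S_1 f = \sum_{n=1}^\infty (f,f_n)_1 f_n$ is a bounded, boundedly invertible, positive self-adjoint operator in $(\widehat{\sH},(\cdot,\cdot)_1)$, and the reconstruction formula \eqref{fr61}, namely $f = \sum_{n=1}^\infty (f, S_1^{-1}f_n)_1 f_n$, holds for \emph{all} $f\in\widehat{\sH}$, with convergence in $(\cdot,\cdot)_1$. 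So the whole task is to rewrite the $(\cdot,\cdot)_1$-coefficients appearing in \eqref{fr61} in terms of the original indefinite inner product $[\cdot,\cdot]$, \emph{provided} $f\in\mathfrak{D}$, and to identify the resulting operator $S^{-1}$ as the one determined by $Sf=\sum_{n}[f,f_n]f_n$.

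\textbf{Key steps.} First I would invoke Lemma \ref{agga20}: in the $J$-orthogonal case $[\cdot,\cdot]$ and $[\cdot,\cdot]_1$ agree on $\mathfrak{D}$, and $[\cdot,\cdot]_1$ is the continuous extension of $[\cdot,\cdot]$ to all of $\widehat{\sH}$. Since each $f_n$ lies in $\mathcal{D}\subset\mathfrak{D}$, we have $[f,f_n]=[f,f_n]_1$ for every $f\in\widehat{\sH}$. Next, recall from part III of Section \ref{2.2} that in the $J$-orthogonal case $\mathcal{C}=J_{\mathcal{M}}$ (on $\mathcal{D}$, extended to $\mathfrak{D}$) and $(\cdot,\cdot)_1=[\mathcal{C}\cdot,\cdot]$ holds on $\mathfrak D$, with $\mathcal{C}^2=I$ there; the operator identity $(\cdot,\cdot)_1=[J_{\widehat{\mathcal M}}\cdot,\cdot]_1$ holds on all of $\widehat{\sH}$ with $J_{\widehat{\mathcal M}}$ self-adjoint in $(\widehat{\sH},[\cdot,\cdot]_1)$. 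Define $S:= S_1 J_{\widehat{\mathcal M}}$ as an operator on $\widehat{\sH}$; then for $f\in\mathfrak D$ one computes $Sf = S_1 J_{\widehat{\mathcal M}} f = \sum_n (J_{\widehat{\mathcal M}}f, f_n)_1 f_n = \sum_n [f,f_n]_1 f_n = \sum_n [f,f_n] f_n$, which matches the stated formula for $S$. Because $S_1$ and $J_{\widehat{\mathcal M}}$ are both boundedly invertible in $(\widehat{\sH},(\cdot,\cdot)_1)$, so is $S$, and $S^{-1} = J_{\widehat{\mathcal M}} S_1^{-1} = J_{\widehat{\mathcal M}}^{-1} S_1^{-1}$. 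Then for any $f\in\widehat{\sH}$,
\begin{equation*}
(f, S_1^{-1}f_n)_1 = [J_{\widehat{\mathcal M}} f, S_1^{-1} f_n]_1 = [f, J_{\widehat{\mathcal M}} S_1^{-1} f_n]_1 = [f, S^{-1}f_n]_1,
\end{equation*}
and if moreover $f\in\mathfrak{D}$ this equals $[f, S^{-1}f_n]$ by Lemma \ref{agga20}, since $S^{-1}f_n\in\mathcal{D}\subset\mathfrak{D}$. Substituting into \eqref{fr61} yields $f = \sum_{n=1}^\infty [f, S^{-1}f_n] f_n$ with convergence in $(\widehat{\sH},(\cdot,\cdot)_1)$, which is precisely \eqref{fr71}.

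\textbf{Main obstacle.} The delicate point is bookkeeping about \emph{where each expression lives} and why the coefficient rewriting is legitimate. The coefficients $[f,f_n]$ make sense for every $f\in\widehat{\sH}$ (via $[\cdot,\cdot]_1$), but $[f,S^{-1}f_n]$ requires only that $S^{-1}f_n$ be a vector against which $[\cdot,\cdot]$ is defined — this is where $f_n\in\mathcal{D}$, hence $S^{-1}f_n\in\mathcal{D}\subset\mathfrak{D}$, together with Lemma \ref{agga20} is essential. The restriction $f\in\mathfrak{D}$ in the statement is \emph{not} needed for convergence of the series (the frame reconstruction \eqref{fr61} converges for all $f\in\widehat{\sH}$); it is needed only so that the left-hand side, written as $\sum [f,f_n]S^{-1}f_n$ with $[f,f_n]$ interpreted via the \emph{original} indefinite product, actually reconstructs $f$ — i.e. so that $[f,f_n]=[f,f_n]_1$ is legitimate. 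I would also need to check that $S$ as defined here is consistent with $T T^\dagger$-type formulas and record that $S$ is $J$-self-adjoint and $J$-positive in the extended Krein space $(\widehat{\sH},[\cdot,\cdot]_1)$, mirroring Proposition \ref{fr37}; this follows from $S=S_1 J_{\widehat{\mathcal M}}$, self-adjointness of $S_1$ in $(\cdot,\cdot)_1$, and $J_{\widehat{\mathcal M}}$ being the fundamental symmetry of $(\widehat{\sH},[\cdot,\cdot]_1)$, exactly as in the proof of Proposition \ref{fr37}. No genuinely hard analysis arises — the content is entirely in tracking the three inner products $(\cdot,\cdot)$, $(\cdot,\cdot)_1$, $[\cdot,\cdot]$ and their respective domains of validity.
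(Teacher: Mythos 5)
Your overall route --- pass to the Krein space $(\widehat{\sH},[\cdot,\cdot]_1)$, relate the stated operator $S$ to the frame operator $S_1$ via $S=S_1J_{\widehat{\mathcal M}}$, and use Lemma \ref{agga20} to trade $[\cdot,\cdot]_1$ for $[\cdot,\cdot]$ on $\mathfrak{D}$ --- is essentially the paper's (the paper packages the first two steps as: $\cF$ is a $J_{\widehat{\mathcal M}}$-frame in $(\widehat{\sH},[\cdot,\cdot]_1)$ whose subspaces $\widehat{\mathcal M}_\pm$ are maximal uniformly definite there, so Proposition \ref{fr37} applies in that space). But your final step proves the wrong identity and leans on an unjustified claim. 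Rewriting the coefficients $(f,S_1^{-1}f_n)_1$ gives you $f=\sum_n[f,S^{-1}f_n]\,f_n$, which is \emph{not} \eqref{fr71}: the proposition asserts $f=\sum_n[f,f_n]\,S^{-1}f_n$, the \emph{second} half of the reconstruction formula, in which the indefinite product is applied to the pair $(f,f_n)$ and the vectors $S^{-1}f_n$ appear only as summands. To make sense of $[f,S^{-1}f_n]$ with the original bracket you assert $S^{-1}f_n\in\mathcal{D}\subset\mathfrak{D}$, but $S^{-1}=J_{\widehat{\mathcal M}}S_1^{-1}$ is an operator in $\widehat{\sH}$, and nothing guarantees that $S^{-1}f_n$ lands back in $\sH$ at all, let alone in $\mathcal{D}$. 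The paper is explicit on this point: the identity $f=\sum_n[f,S^{-1}f_n]f_n$ on $\mathfrak{D}$ appears only in the Remark following Proposition \ref{fr41}, under the \emph{additional assumption} $S^{-1}f_n\in\mathfrak{D}$.

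The fix stays entirely within your framework. Start from the second part of \eqref{fr61}, $f=\sum_n(f,f_n)_1S_1^{-1}f_n$, write $(f,f_n)_1=[J_{\widehat{\mathcal M}}f,f_n]_1$, and apply the bounded surjective operator $J_{\widehat{\mathcal M}}$ to both sides to obtain $g=\sum_n[g,f_n]_1S^{-1}f_n$ for every $g\in\widehat{\sH}$, with $S^{-1}=J_{\widehat{\mathcal M}}S_1^{-1}$ exactly as you defined it. Now only the scalar coefficients need reinterpreting: for $g\in\mathfrak{D}$ one has $[g,f_n]_1=[g,f_n]$ by Lemma \ref{agga20}, since $f_n\in\mathcal{D}\subset\mathfrak{D}$ (this is the one place the hypothesis $g\in\mathfrak{D}$ is used, as you correctly observe), and \eqref{fr71} follows with convergence in $(\widehat{\sH},(\cdot,\cdot)_1)$. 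No statement about where $S^{-1}f_n$ lives is needed.
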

\begin{proof}
By the construction, the indefinite inner products $[\cdot,\cdot]$ and $[\cdot,\cdot]_1$ coincide on ${\mathcal{M}}_{\pm}$ and $\mbox{span}\{\cF_\pm\}$
are dense subsets of the Hilbert spaces $({\mathcal{M}}_{\pm}, \pm[\cdot,\cdot])$ and $(\widehat{{\mathcal{M}}}_{\pm}, \pm[\cdot,\cdot]_1)$.
This means that we can replace,  in  Definition \ref{def3},  $[\cdot,\cdot]$ and $\mathcal{M}_\pm$ by  $[\cdot,\cdot]_1$ and  $\widehat{\mathcal{M}}_\pm$, respectively.  
Therefore, \emph{the $J$-frame $\cF$ in the Krein space $({\sH}, [\cdot,\cdot])$
 is simultaneously  a  $J_{\widehat{\mathcal{M}}}$-frame in the Krein space  $(\widehat{\sH}, [\cdot,\cdot]_1)$}.
 In the latter case,  since decomposition  \eqref{fr30} holds,
we can apply Proposition \ref{fr37} for  the $J_{\widehat{\mathcal{M}}}$-frame $\cF$.
Hence, for all $f\in\widehat{\sH}$,
\begin{equation}\label{fr38}
f=\sum_{n=1}^\infty[f,S^{-1}f_n]_{1}f_n=\sum_{n=1}^\infty{[f,f_n]_{1}S^{-1}f_n},
\end{equation}
where  $Sf=\sum_{n=1}^\infty[f,f_n]_1f_n$  is a  $J_{\widehat{\mathcal{M}}}$-self-adjoint  bounded operator in the Krein space
$(\widehat{\sH}, [\cdot,\cdot]_1)$.
The series \eqref{fr38} converge in the Hilbert space  $(\widehat{\sH}, (\cdot,\cdot)_1)$.  In particular,  for
 $f\in\mathfrak{D}$  we can  change $[f,f_n]_1$ into $[f,f_n]$  due to Lemma \ref{agga20}.
\end{proof}
\begin{remark}
 Additional assumption that $S^{-1}f_n\in\mathfrak{D}$  and Lemma \ref{agga20} lead to the conclusion
 that
$f=\sum_{n=1}^\infty[f,S^{-1}f_n]f_n$  for all $f\in\mathfrak{D}.$
\end{remark}

\begin{proposition}\label{fr81}
Let $\cF=\{f_n\}$  be a  $J$-frame and  let the subspaces $\mathcal{M}_\pm$ be $J$-orthogonal.
Then there exists a self-adjoint operator $Q$ in the Hilbert space $(\sH, (\cdot,\cdot))$, which anticommutes with
$J$ and such that the sequence $\{e^{Q/2}f_n\}$ is a conventional frame in $(\sH, (\cdot,\cdot))$ with the same frame bounds
$A\leq B$.
\end{proposition}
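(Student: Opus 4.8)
The plan is to produce an explicit unitary operator from the Hilbert space $(\widehat{\sH},(\cdot,\cdot)_1)$ of Section \ref{2.2} onto $(\sH,(\cdot,\cdot))$ and to transport the frame property of $\cF$ through it. Since $\mathcal{M}_\pm$ are assumed $J$-orthogonal, we are in the situation of item \textbf{III} of Section \ref{2.2}: the operator $\cC$ coincides with $J_{\mathcal{M}}$ and admits the representation $\cC=Je^Q$ with $Q$ an (in general unbounded) self-adjoint operator in $(\sH,(\cdot,\cdot))$ anticommuting with $J$; moreover $e^{Q/2}=\sqrt{JJ_{\mathcal{M}}}$ is self-adjoint in $(\sH,(\cdot,\cdot))$ with $\mathcal{D}(e^{Q/2})=\mathfrak{D}$, and by \eqref{fr59b} one has $(f,g)_1=(e^{Q/2}f,e^{Q/2}g)$ for all $f,g\in\mathfrak{D}$. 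In particular $e^{Q/2}$ is isometric as a map from $(\mathfrak{D},\|\cdot\|_1)$ into $(\sH,\|\cdot\|)$. This $Q$ is the operator announced in the statement.

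Next I would extend $e^{Q/2}$ to a unitary $V$. The linear manifold $\mathcal{D}$ is dense in $(\widehat{\sH},(\cdot,\cdot)_1)$ by construction, and $\mathcal{D}=\mathcal{D}(\cC)=\mathcal{D}(e^Q)\subseteq\mathcal{D}(e^{Q/2})=\mathfrak{D}$, so $\mathfrak{D}$ is dense in $\widehat{\sH}$. Hence the isometry $e^{Q/2}\colon(\mathfrak{D},\|\cdot\|_1)\to(\sH,\|\cdot\|)$ extends by continuity to an isometry $V\colon\widehat{\sH}\to\sH$. Its range is closed (the image of a complete space under an isometry) and contains $e^{Q/2}(\mathfrak{D})=\mathcal{R}(e^{Q/2})$; since $e^{Q/2}$ is a positive self-adjoint operator with trivial kernel, $\mathcal{R}(e^{Q/2})$ is dense in $\sH$, so $\mathcal{R}(V)=\sH$ and $V$ is unitary. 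Note that $V$ coincides with $e^{Q/2}$ on $\mathfrak{D}=\sH\cap\widehat{\sH}$; in particular, since each $f_n$ lies in $\mathcal{M}_+$ or $\mathcal{M}_-$ and therefore in $\mathcal{D}\subseteq\mathfrak{D}$, we have $Vf_n=e^{Q/2}f_n$.

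Finally I would transport the frame inequalities. By Proposition \ref{prop1} (if $\mathcal{D}=\sH$) or Proposition \ref{prop3} (if $\mathcal{D}\neq\sH$), the family $\cF=\{f_n\}$ is a conventional frame in $(\widehat{\sH},(\cdot,\cdot)_1)$ with bounds $A\leq B$. Given $g\in\sH$, write $g=Vf$ with $f\in\widehat{\sH}$; using unitarity of $V$ one obtains $(g,e^{Q/2}f_n)=(Vf,Vf_n)=(f,f_n)_1$ and $\|g\|^2=\|f\|_1^2$, so $\sum_n|(g,e^{Q/2}f_n)|^2=\sum_n|(f,f_n)_1|^2$ lies between $A\|f\|_1^2=A\|g\|^2$ and $B\|f\|_1^2=B\|g\|^2$. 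Thus $\{e^{Q/2}f_n\}$ is a conventional frame in $(\sH,(\cdot,\cdot))$ with the same frame bounds $A\leq B$. The step I expect to require the most care is the surjectivity of $V$ — i.e.\ the density of $\mathcal{R}(e^{Q/2})$, which rests on $e^{Q/2}$ being injective and self-adjoint — together with the bookkeeping that the two pictures $\sH$ and $\widehat{\sH}$ genuinely agree on the vectors $f_n$, so that $Vf_n$ may be identified with $e^{Q/2}f_n$.
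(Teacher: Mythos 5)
Your proof is correct and follows essentially the same route as the paper: both take $Q$ from part \textbf{III} of Section \ref{2.2}, use \eqref{fr59b} to convert $(f,f_n)_1$ into $(e^{Q/2}f,e^{Q/2}f_n)$, invoke Proposition \ref{prop1} or \ref{prop3} for the frame property in $(\widehat{\sH},(\cdot,\cdot)_1)$, and exploit the density of $\mathcal{R}(e^{Q/2})$ in $\sH$. The only cosmetic difference is that you package the final extension step as a unitary $V$ extending the isometry $e^{Q/2}$, whereas the paper extends the frame inequalities directly from the dense set $\mathcal{R}(e^{Q/2})$ to all of $\sH$ via \cite[Lemma 5.1.7]{Ole}.
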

\begin{proof}
Each direct sum of $J$-orthogonal maximal definite subspaces ${\mathcal{M}_{\pm}}$  generates
 a self-adjoint operator $Q$ in the Hilbert space $\sH$, which anticommutes with
$J$ (see Section \ref{2.2}).  In this case, relation \eqref{fr59b} holds
for any $f,g\in\mathfrak{D}$.

Let us assume that the direct sum of ${\mathcal{M}_{\pm}}$ does not coincide with $\sH$. Then,
according to Proposition \ref{prop3},  $\cF$ is a  frame in the Hilbert space $(\widehat{\sH}, (\cdot,\cdot)_1)$.
Therefore, $A(f,f)_1\leq \sum_{n\in\mathbb{N}}{|(f,f_n)_1|^2}\leq B(f, f)_1$  for all   $f\in\mathfrak{D}$. Using \eqref{fr59b},
we rewrite these inequalities as follows: $A\|\gamma\|^2\leq \sum_{n\in\mathbb{N}}{|(\gamma, e^{Q/2}f_n)|^2}\leq B\|\gamma\|^2$,
where $\gamma=e^{Q/2}f$ runs the dense set $\mathcal{R}(e^{Q/2})$ in $(\sH, (\cdot, \cdot))$.
The obtained inequalities can be extended onto $\sH$ due to \cite[Lemma 5.1.7]{Ole}. Therefore, $\{e^{Q/2}f_n\}$ is a frame in $(\sH, (\cdot,\cdot))$.
The case ${\mathcal{M}_{+}}\dot{+}{\mathcal{M}_{-}}=\sH$ is considered in the same manner with the use of
Proposition \ref{prop1}.
\end{proof}

The inverse statement is also true.
\begin{proposition}\label{fr91}
Assume that $\{g_n\}$ is a frame in the Hilbert space $(\sH, (\cdot,\cdot))$ such that
each $g_n$ belongs to one of the subspaces $\sH_+$ or  $\sH_-$ of the fundamental decomposition
\eqref{fr7}  and there exists a self-adjoint operator $Q$  in $(\sH, (\cdot,\cdot))$ which anticommutes with $J$ and such that
 $\{\cosh{Q/2} \ g_n\}$  is a complete set in $\sH$. Then the sequence $\{e^{-Q/2}g_n\}$ is a $J$-frame.
\end{proposition}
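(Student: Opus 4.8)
The plan is to reverse the construction of Proposition \ref{fr81}. We are given a frame $\{g_n\}$ in $(\sH,(\cdot,\cdot))$ with each $g_n$ lying in $\sH_+$ or in $\sH_-$, together with a self-adjoint $Q$ anticommuting with $J$ such that $\{\cosh\frac{Q}{2}\,g_n\}$ is complete in $\sH$. Set $f_n=e^{-Q/2}g_n$; we must show $\cF=\{f_n\}$ is a $J$-frame in the sense of Definition \ref{def3}. Partition the index set according to the sign of $[f_n,f_n]$. Since $[f_n,f_n]=(Jf_n,f_n)=(Je^{-Q/2}g_n,e^{-Q/2}g_n)$ and $Q$ anticommutes with $J$, we have $Je^{-Q/2}=e^{Q/2}J$, so $[f_n,f_n]=(e^{Q/2}Jg_n,e^{-Q/2}g_n)=(Jg_n,g_n)$; hence $n\in\mathbb N_\pm$ exactly when $g_n\in\sH_\pm$, and $\operatorname{sgn}[f_n,f_n]=\pm1$ accordingly.

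First I would identify the subspaces $\mathcal M_\pm=\overline{\operatorname{span}\{f_n:n\in\mathbb N_\pm\}}$. Because $\{g_n\}_{n\in\mathbb N_\pm}$ is a frame for the Hilbert space $\sH_\pm$ (a frame decomposes along an orthogonal direct sum into frames for the summands, using that each $g_n$ lies in one summand), the closure of $\operatorname{span}\{g_n:n\in\mathbb N_\pm\}$ is $\sH_\pm$. Applying the closed operator $e^{-Q/2}$ and recalling the formula \eqref{fr101}, namely $\mathcal M_\pm=(I-\tanh\frac{Q}{2})\sH_\pm$, I would check that $\mathcal M_\pm=\overline{(I-\tanh\frac{Q}{2})\sH_\pm}$ coincides with $\overline{e^{-Q/2}\operatorname{span}\{g_n:n\in\mathbb N_\pm\}}$; the point is that $e^{-Q/2}=\cosh\frac{Q}{2}\,(I-\tanh\frac{Q}{2})$ restricted to $\sH_\pm$ maps onto a dense subset of $(I-\tanh\frac{Q}{2})\sH_\pm$ in the energetic topology, and the completeness hypothesis on $\{\cosh\frac{Q}{2}\,g_n\}$ is precisely what is needed to pass to density in the $(\cdot,\cdot)_1$-norm. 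This gives that $\mathcal M_\pm$ are the maximal definite subspaces attached to $Q$ via Section \ref{2.2}, and they are $J$-orthogonal.

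Next I would verify the frame inequalities \eqref{fr5b}. For $f\in\operatorname{span}\{\cF_+\}$ write $f=e^{-Q/2}\gamma$ with $\gamma\in\operatorname{span}\{g_n:n\in\mathbb N_+\}\subset\sH_+$. Then $[f,f_n]=[e^{-Q/2}\gamma,e^{-Q/2}g_n]=(Je^{-Q/2}\gamma,e^{-Q/2}g_n)=(e^{Q/2}J\gamma\,,e^{-Q/2}g_n)=(J\gamma,g_n)=(\gamma,g_n)$, using again $Je^{-Q/2}=e^{Q/2}J$ and $\gamma\in\sH_+$ so $J\gamma=\gamma$. Meanwhile $[f,f]=(J\gamma,\gamma)=\|\gamma\|^2$ since $\gamma\in\sH_+$, and $|[f,f]|=\|\gamma\|^2$. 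So the desired inequalities $A|[f,f]|\le\sum_{n\in\mathbb N_+}|[f,f_n]|^2\le B|[f,f]|$ become $A\|\gamma\|^2\le\sum_{n\in\mathbb N_+}|(\gamma,g_n)|^2\le B\|\gamma\|^2$, which hold because $\{g_n\}_{n\in\mathbb N_+}$ is a frame for $\sH_+$ with bounds $A\le B$ (the frame bounds pass to the summands). The negative case is symmetric, with $|[f,f]|=\|\gamma\|^2$ for $\gamma\in\sH_-$ as well. Finally I would record that $\mathcal M_\pm$ are indeed maximal definite: this is automatic from \eqref{fr101}, since $(I-\tanh\frac{Q}{2})\sH_\pm$ is maximal positive/negative for any self-adjoint $Q$ anticommuting with $J$.

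The main obstacle I anticipate is the identification step for $\mathcal M_\pm$ — specifically, showing that the completeness of $\{\cosh\frac{Q}{2}\,g_n\}$ in $\sH$ translates into $\overline{\operatorname{span}\{f_n:n\in\mathbb N_\pm\}}$ (closure in the \emph{original} norm $\|\cdot\|$) being exactly $\mathcal M_\pm=(I-\tanh\frac{Q}{2})\sH_\pm$, rather than merely a dense subspace of it. One has to be careful because $e^{-Q/2}$ is unbounded, so $e^{-Q/2}\operatorname{span}\{g_n\}$ need not be closed; the resolution is that $\{g_n\}_{n\in\mathbb N_\pm}$ being a frame for $\sH_\pm$ together with $\{\cosh\frac Q2 g_n\}$ complete forces $\operatorname{span}\{(I-\tanh\frac Q2)g_n:n\in\mathbb N_\pm\}$ to be $\|\cdot\|$-dense in $(I-\tanh\frac Q2)\sH_\pm$, because $(I-\tanh\frac Q2)$ is bounded on $\sH_\pm$ with dense range in $\mathcal M_\pm$ and the completeness hypothesis rules out a nontrivial orthogonal complement. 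Once this is settled, everything else is the routine substitution $f=e^{-Q/2}\gamma$ carried out above, and Definition \ref{def3} is verified directly.
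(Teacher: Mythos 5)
Your proposal is correct and follows essentially the same route as the paper: factor $e^{-Q/2}=(I-\tanh\frac{Q}{2})\cosh\frac{Q}{2}$, use the completeness of $\{\cosh\frac{Q}{2}\,g_n\}$ together with \eqref{fr101} to identify $\mathcal{M}_{\pm}$ with the maximal definite subspaces attached to $Q$, and transfer the frame inequalities through the substitution $\gamma=e^{Q/2}f$, noting that $\{g_n\}_{n\in\mathbb{N}_{\pm}}$ is a frame for $\sH_{\pm}$ with the same bounds. The only slip is in your ``main obstacle'' paragraph, where the vectors whose span must be shown dense in $\mathcal{M}_{\pm}$ are $f_n=(I-\tanh\frac{Q}{2})\cosh\frac{Q}{2}\,g_n$, not $(I-\tanh\frac{Q}{2})g_n$; with that correction the density follows, as in the paper, from the completeness of $\{\cosh\frac{Q}{2}\,g_n\}$ in $\sH_{\pm}$ and the boundedness of $I-\tanh\frac{Q}{2}$ with closed range $\mathcal{M}_{\pm}$ on $\sH_{\pm}$.
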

\begin{proof}
The operator $\cosh{Q/2}=\frac{1}{2}(e^{Q/2}+e^{-Q/2})$  commutes with $J$
(since $Q$ anticommutes with $J$).  Therefore, the vector
$x_n=\cosh{Q/2} \ g_n$ belongs to the  same subspace ($\sH_+$ or  $\sH_-$)  that $g_n$.
Denote $x_n^{\pm}=x_n$ if $x_n\in\sH_{\pm}$.

According to Section \ref{2.2},  the operator $Q$
uniquely determines a $J$-orthogonal pair of maximal definite subspaces $\mathcal{M}_{\pm}$,
see \eqref{fr101}. Denote
$$
\mathcal{M}_+'=\overline{\mbox{span}\{(I-\tanh{Q/2})x_n^{+}\}},  \qquad \mathcal{M}_-'=\overline{\mbox{span}\{(I-\tanh{Q/2})x_n^{-}\}}.
$$

In view of  \eqref{fr101} and the fact that the set $\{x_n=\cosh{Q/2} \ g_n\}$
is complete in $\sH$,  we decide that  $\mathcal{M}_{\pm}'=\mathcal{M}_\pm$.
 Moreover,
$$
f_n=(I-\tanh\frac{Q}{2})x_n=(I-\tanh\frac{Q}{2})\cosh\frac{Q}{2} \ g_n=\cosh\frac{Q}{2} \ g_n - \sinh\frac{Q}{2} \ g_n=e^\frac{-Q}{2}g_n.
$$
Therefore,  $g_n=e^{Q/2}f_n$, where $f_n$ belongs to one of the sets  $\mathcal{M}_\pm$
and  $\mbox{sgn}[g_n,g_n]=\mbox{sgn}[f_n,f_n]$.
The frame inequalities  for the frame $\{g_n\}$ can be rewritten as follows:
\begin{equation}\label{fr103}
A\|\gamma\|^2\leq \sum_{n\in\mathbb{N}}{|(\gamma, e^{Q/2}f_n)|^2}\leq B\|\gamma\|^2,  \qquad \gamma\in\sH.
\end{equation}
Assuming in \eqref{fr103} that $\gamma=e^{Q/2}f$, where $f\in\mathcal{M}_\pm$ and using \eqref{fr9b}, \eqref{fr59b} we obtain
the inequalities \eqref{fr5b} for all ${f}\in\mathcal{M}_\pm$.
Therefore, $\{f_n=e^{-Q/2}g_n\}$ is a $J$-frame in the sense of Definition \ref{def3} .
\end{proof}

\section{Frames in Krein spaces. Selected topics}
\subsection{$A$-tight frames and exact frames.}
A $J$-frame $\cF$ is called \emph{$A$-tight}, if $A=B$ in  Definition \ref{def3}.
\begin{proposition}\label{prop4}
If a $J$-frame $\cF$ is $A$-tight, then for all $f\in\mathcal{D}$
$$
f=\frac{1}{2A}\sum_{n=1}^\infty[(J_{\mathcal{M}}+\sigma_nI)f, f_n]f_n, \qquad \sigma_n=\mbox{sgn}[f_n,f_n].
$$
The series converges in the Hilbert space $(\sH, (\cdot,\cdot))$ when $\mathcal{D}=\sH$. Otherwise ($\mathcal{D}\not=\sH$),
the convergence should be considered in $(\widehat{\sH}, (\cdot,\cdot)_1)$.
\end{proposition}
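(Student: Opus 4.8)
The plan is to derive the tight-frame reconstruction formula from the general reconstruction formulas already at our disposal, exploiting that an $A$-tight $J$-frame has $A=B$ in \eqref{fr5b}, so that the argument of Proposition \ref{prop1} (resp. Proposition \ref{prop3}) shows $\cF$ is an $A$-tight conventional frame for $(\sH,(\cdot,\cdot)_1)$ (resp. $(\widehat{\sH},(\cdot,\cdot)_1)$). For an $A$-tight frame the frame operator $S_1$ in \eqref{fr41b} equals $AI$, hence $S_1^{-1}=\frac{1}{A}I$. Substituting this into the reconstruction formula \eqref{fr61} gives, for every $f$ in the relevant space, $f=\frac{1}{A}\sum_{n=1}^\infty (f,f_n)_1 f_n$. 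So the whole task reduces to re-expressing the coefficient $(f,f_n)_1$ in terms of the indefinite inner product $[\cdot,\cdot]$, which is exactly where the operator $J_{\mathcal{M}}$ and the signs $\sigma_n$ enter.

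\textbf{Rewriting the coefficients.} The key computation is the identity $(f,f_n)_1=\tfrac12[(J_{\mathcal{M}}+\sigma_n I)f,f_n]$ for $f\in\mathcal{D}$. To see this, fix $n$ and suppose first $n\in\mathbb{N}_+$, so $f_n\in\mathcal{M}_+$ and $\sigma_n=1$. Writing $f=f_{\mathcal{M}_+}+f_{\mathcal{M}_-}$ along \eqref{sas12}, the definition \eqref{fr9b} of $(\cdot,\cdot)_1$ gives $(f,f_n)_1=[f_{\mathcal{M}_+},f_n]$, since the $\mathcal{M}_-$-component drops out. On the other hand $J_{\mathcal{M}}f=f_{\mathcal{M}_+}-f_{\mathcal{M}_-}$, so $(J_{\mathcal{M}}+I)f=2f_{\mathcal{M}_+}$, whence $\tfrac12[(J_{\mathcal{M}}+I)f,f_n]=[f_{\mathcal{M}_+},f_n]=(f,f_n)_1$. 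The case $n\in\mathbb{N}_-$ is symmetric: then $\sigma_n=-1$, $(J_{\mathcal{M}}-I)f=-2f_{\mathcal{M}_-}$, and $(f,f_n)_1=-[f_{\mathcal{M}_-},f_n]=\tfrac12[(J_{\mathcal{M}}-I)f,f_n]$. Combining, $(f,f_n)_1=\tfrac12[(J_{\mathcal{M}}+\sigma_n I)f,f_n]$ in all cases, and inserting this into $f=\frac1A\sum_n (f,f_n)_1 f_n$ yields the asserted formula.

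\textbf{Convergence and the two cases.} It remains to track in which Hilbert space the series converges. When $\mathcal{D}=\sH$, Proposition \ref{prop1} places us in $(\sH,(\cdot,\cdot)_1)$, the series \eqref{fr61} converges there, and since $(\cdot,\cdot)_1$ is equivalent to $(\cdot,\cdot)$ in this case the series also converges in $(\sH,(\cdot,\cdot))$; moreover $J_{\mathcal{M}}$ is then bounded on all of $\sH$, so the formula is literally valid for every $f\in\sH=\mathcal{D}$. When $\mathcal{D}\neq\sH$, Proposition \ref{prop3} gives a frame in $(\widehat{\sH},(\cdot,\cdot)_1)$ and the reconstruction formula \eqref{fr61} holds for all $f\in\widehat{\sH}$ with convergence in that space; restricting to $f\in\mathcal{D}\subset\mathcal{D}(J_{\mathcal{M}})$ keeps $(J_{\mathcal{M}}+\sigma_n I)f$ meaningful, and the rewriting of the coefficients via \eqref{fr59} (equivalently via the computation above, which only used \eqref{fr9b} and \eqref{fr10b} on $\mathcal{D}$) goes through verbatim, so the series converges in $(\widehat{\sH},(\cdot,\cdot)_1)$.

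\textbf{Main obstacle.} The only delicate point is bookkeeping about domains: in the non-equivalent case $(\cdot,\cdot)_1$ cannot be pulled back to $[\cdot,\cdot]$ on all of $\widehat{\sH}$, so the coefficient identity must be restricted to $\mathcal{D}$ (where $J_{\mathcal{M}}$ is defined by \eqref{fr10b}); this is precisely why the statement is formulated for $f\in\mathcal{D}$ rather than for all $f\in\widehat{\sH}$, and why the convergence clause splits according to whether $\mathcal{D}=\sH$. Everything else is a direct substitution of $S_1^{-1}=\frac1A I$ into already-established reconstruction formulas.
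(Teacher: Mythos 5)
Your proposal is correct and follows essentially the same route as the paper: reduce to the $A$-tight conventional frame in $(\sH,(\cdot,\cdot)_1)$ or $(\widehat{\sH},(\cdot,\cdot)_1)$ via Propositions \ref{prop1}/\ref{prop3}, substitute $S_1=AI$ into \eqref{fr61}, and rewrite $(f,f_n)_1=\tfrac12[(J_{\mathcal{M}}+\sigma_nI)f,f_n]$. The only cosmetic difference is that you verify the coefficient identity by decomposing $f$ along $\mathcal{M}_\pm$ directly, whereas the paper invokes the identity $(f,g)_1=\tfrac12([J_{\mathcal{M}}f,g]+[f,J_{\mathcal{M}}g])$ from the proof of Lemma \ref{lem2} together with $J_{\mathcal{M}}f_n=\sigma_nf_n$ -- the same computation in different clothing.
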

\begin{proof}
Due to Propositions \ref{prop1},  \ref{prop3},  $\cF$  is  a conventional $A$-tight  frame in one of the Hilbert spaces
$\sH$ or $\widehat{\sH}$. Then the frame operator $S_1$ in \eqref{fr41b} coincides with $AI$  \cite[Theorem 8.3]{Heil}.
According to \eqref{fr61}, $f=\frac{1}{A}\sum_{n=1}^\infty(f, f_n)_1f_n$  for all
$f\in\mathcal{D}$.

It follows from the proof of Lemma \ref{lem2} and \eqref{fr10b},
$$
(f, f_n)_1=\frac{1}{2}([J_{\mathcal{M}}f, f_n]+[f, J_{\mathcal{M}}f_n])=\frac{1}{2}[(J_{\mathcal{M}}+\sigma_nI)f, f_n]
$$
that completes the proof.
\end{proof}

Let $\cF$ be a $J$-frame. Then $\cF$ turns out to be a frame either in the Hilbert space
 $\sH$ or in the Hilbert space  $\widehat{\sH}$.
 We will say  that $\cF$ is an \emph{exact $J$-frame} if it is an exact frame in one of the Hilbert spaces above.

If ${\mathcal D}=\sH$,  then an exact  $J$-frame  $\cF$  turns out to be a Riesz basis in the Hilbert space
$\sH$  (see,  Proposition \ref{prop1} and \cite[Theorem 8.27]{Heil}).  Similarly, if ${\mathcal D}\not=\sH$, then
$\cF$ is a Riesz basis in $\widehat{\sH}$ but it cannot be a basis in $\sH$.
We can only state that the sets  $\cF_{\pm}$ defined in \eqref{fr101b}  are  \emph{exact sequences}  (i.e., they are minimal and complete)
in the subspaces  $(\mathcal{M}_{\pm}, (\cdot,\cdot))$ of the Hilbert space $\sH$.
 Indeed, the completeness of $\cF_{\pm}$ in $\mathcal{M}_{\pm}$  follows from Definition \ref{def3}.
   Assume that $\cF_+=\{f_n\}_{n\in\mathbb{N}_+}$ is not a minimal sequence in
 $(\mathcal{M}_{+}, (\cdot,\cdot))$. Then there exist $f\in\cF_+$ and linear combinations 
 $g_k$ of $\cF_+\setminus\{f\}$
such that $\|f-g_k\|\to{0}$. Therefore $\|f-g_k\|_1=\sqrt{[f-g_k, f-g_k]}$ also vanishes when $k\to\infty$ (because $\|\cdot\|^2\geq[\cdot,\cdot]=(\cdot,\cdot)_1$ on $\mathcal{M}_+$).
The latter means that $\cF_+$  cannot be a minimal sequence in the Hilbert space $(\widehat{\mathcal{M}}_+,  (\cdot,\cdot)_1)$ that is impossible.
 Therefore, $\cF_+$  is a minimal sequence in  $(\mathcal{M}_{+}, (\cdot,\cdot))$.  The case $\cF_-$ is considered in a similar manner.

\subsection{$J$-orthogonal sequences and $J$-orthogonal Schauder bases.}

{\bf I.}   Let a sequence  $\{f_n\}_{n=1}^{\infty}$  be
$J$-orthogonal (i.e.,  $[f_n, f_m]=0$ for $n\not=m$). Then the subspaces
$\mathcal{M}_{\pm}$ defined by \eqref{fr101b} are $J$-orthogonal
in the Krein space $(\mathfrak{H}, [\cdot,\cdot])$.

\begin{proposition}\label{prop5}
An $J$-orthogonal sequence $\cF=\{f_n\}$ is  a $J$-frame in the sense of Definition \ref{def3} if and only if $\cF$ is $J$-bounded, i.e.,
\begin{equation}\label{fr299}
0<A\leq |[f_n, f_n]| \leq B<\infty, \qquad \forall f_n\in\cF.
\end{equation}
  and the subspaces  ${\mathcal M}_\pm$ are maximal definite in the Krein space
$(\sH, [\cdot,\cdot])$.
 \end{proposition}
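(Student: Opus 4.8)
The plan is to check Definition \ref{def3} directly in both directions, using the $J$-orthogonality to diagonalize every sum that appears. First I would record the elementary observation that, because $\cF$ is $J$-orthogonal and --- in both directions --- the subspaces $\mathcal{M}_\pm$ are maximal definite, a nonzero $f_n$ lying in $\mathcal{M}_+$ (resp.\ $\mathcal{M}_-$) cannot be neutral; hence $[f_n,f_n]=|[f_n,f_n]|>0$ for $n\in\mathbb{N}_+$ and $[f_n,f_n]=-|[f_n,f_n]|<0$ for $n\in\mathbb{N}_-$ (throughout I take the $f_n$ nonzero, as is customary for frames). Consequently, for a finite combination $f=\sum_{n\in F}c_nf_n$ with $F\subset\mathbb{N}_+$, the $J$-orthogonality gives $[f,f_m]=c_m[f_m,f_m]$ for $m\in F$ and $[f,f_m]=0$ otherwise, while $[f,f]=\sum_{n\in F}|c_n|^2[f_n,f_n]\ge 0$; in particular $|[f,f]|=\sum_{n\in F}|c_n|^2|[f_n,f_n]|$ and $\sum_{n\in\mathbb{N}_+}|[f,f_n]|^2=\sum_{n\in F}|c_n|^2|[f_n,f_n]|^2$, and the analogous identities hold for $f\in\mbox{span}\{\cF_-\}$ (with $|[f,f]|=-[f,f]$).

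For necessity, suppose $\cF$ is a $J$-frame. Then $\mathcal{M}_\pm$ are maximal definite by Definition \ref{def3}, and to get \eqref{fr299} I would substitute $f=f_m$ (say $m\in\mathbb{N}_+$, the other case being symmetric) into the first line of \eqref{fr5b}: the middle sum collapses to $|[f_m,f_m]|^2$, so
\[
A\,|[f_m,f_m]|\ \le\ |[f_m,f_m]|^2\ \le\ B\,|[f_m,f_m]|,
\]
and dividing by $|[f_m,f_m]|>0$ yields $A\le|[f_m,f_m]|\le B$. For sufficiency, suppose \eqref{fr299} holds and $\mathcal{M}_\pm$ are maximal definite; the latter is exactly the structural requirement of Definition \ref{def3}, so only the frame inequalities remain to be verified. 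Taking $f=\sum_{n\in F}c_nf_n\in\mbox{span}\{\cF_+\}$ as above and writing $t_n=|c_n|^2|[f_n,f_n]|\ge 0$, one has $|[f,f]|=\sum_{n\in F}t_n$ and $\sum_{n\in\mathbb{N}_+}|[f,f_n]|^2=\sum_{n\in F}|[f_n,f_n]|\,t_n$; since $A\le|[f_n,f_n]|\le B$, this gives $A\,|[f,f]|\le\sum_{n\in\mathbb{N}_+}|[f,f_n]|^2\le B\,|[f,f]|$. The same computation on $\mbox{span}\{\cF_-\}$ produces the second line of \eqref{fr5b}, so $\cF$ is a $J$-frame, indeed with the same bounds $A\le B$.

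I do not anticipate a genuine obstacle --- the argument is a direct diagonalization --- but the point that must be handled with care is the compatibility of the index sets $\mathbb{N}_\pm$ with the signs of $[f_n,f_n]$: it is the maximal-definiteness of $\mathcal{M}_\pm$ (in the necessity direction) and the strict lower bound $A>0$ in \eqref{fr299} (in the sufficiency direction) that rule out neutral $f_n$ and guarantee that $|[f,f]|=\sum_{n\in F}|c_n|^2|[f_n,f_n]|$ is a sum of terms of a single sign, with no cancellation. This is also why the resulting frame bounds come out to be exactly $A$ and $B$ rather than something weaker.
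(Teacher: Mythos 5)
Your proof is correct and follows the same route as the paper: the paper's own proof is a two-line sketch ("the $J$-orthogonality and \eqref{fr299} allow one to verify \eqref{fr5b} directly; conversely, take $f=f_n$"), and your argument is precisely that verification carried out in full, including the right observation that maximal definiteness of $\mathcal{M}_\pm$ rules out neutral $f_n$ so that division by $|[f_m,f_m]|$ is legitimate.
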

\begin{proof}
The $J$-orthogonality of $\cF$ and \eqref{fr299} allow one
to verify directly \eqref{fr5b}. Therefore $J$-boundedness of $\cF$ means that $\cF$ is a $J$-frame with frame bounds $A\leq{B}$.
Conversely, if $\cF$ is a $J$-frame, then  \eqref{fr5b} with $f=f_n$ gives \eqref{fr299}.
\end{proof}

\begin{lemma}
The subspaces ${\mathcal M}_\pm$ are maximal definite if and only if the operator
  $JJ_{\mathcal{M}}$  is self-adjoint in the Hilbert space $(\sH, (\cdot,\cdot))$.
\end{lemma}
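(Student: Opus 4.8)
The plan is to work from the definition of $J_{\mathcal M}$ in \eqref{fr10b} together with its indefinite adjoint $J_{\mathcal M}^+$, whose domain ${\mathcal D}(J_{\mathcal M}^+)=\mathcal{M}_-^{[\perp]}\dot{+}\mathcal{M}_+^{[\perp]}$ was identified in Section \ref{2.2}. Recall that $J J_{\mathcal M}$ is the operator $e^Q$ only in the $J$-orthogonal case; in general $J_{\mathcal M}$ need not be $J$-self-adjoint, so $JJ_{\mathcal M}$ need not be $(\cdot,\cdot)$-self-adjoint. The claim is precisely that the self-adjointness of $JJ_{\mathcal M}$ is equivalent to maximality of the definite subspaces $\mathcal{M}_\pm$. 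Since $J$ is bounded, unitary and self-adjoint in $(\sH,(\cdot,\cdot))$, one has $(JJ_{\mathcal M})^* = J_{\mathcal M}^* J$, and the core computation is to relate the Hilbert adjoint $J_{\mathcal M}^*$ (with respect to $(\cdot,\cdot)$) to the indefinite adjoint $J_{\mathcal M}^+$ (with respect to $[\cdot,\cdot]$). The standard identity $A^+ = J A^* J$ for densely defined $A$ gives $J_{\mathcal M}^* = J J_{\mathcal M}^+ J$, hence $(JJ_{\mathcal M})^* = J J_{\mathcal M}^+ J \cdot J = J J_{\mathcal M}^+$. So $JJ_{\mathcal M}$ is self-adjoint in $(\sH,(\cdot,\cdot))$ if and only if $J J_{\mathcal M} = J J_{\mathcal M}^+$, i.e. if and only if $J_{\mathcal M} = J_{\mathcal M}^+$, i.e. if and only if $J_{\mathcal M}$ is $J$-self-adjoint.

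Thus the lemma reduces to showing: $\mathcal{M}_\pm$ are maximal definite $\iff$ $J_{\mathcal M}=J_{\mathcal M}^+$. For the forward direction, if $\mathcal{M}_\pm$ are maximal definite then $\mathcal{M}_\pm^{[\perp]} = \mathcal{M}_\mp$ — this is the standard Krein-space fact that the $J$-orthogonal complement of a maximal positive subspace is maximal negative and, for a \emph{definite} subspace, $\mathcal{M} \subseteq (\mathcal{M}^{[\perp]})^{[\perp]}$ with equality exactly at maximality; I would cite \cite{AZ} here. Consequently ${\mathcal D}(J_{\mathcal M}^+)=\mathcal{M}_-^{[\perp]}\dot{+}\mathcal{M}_+^{[\perp]} = \mathcal{M}_+\dot{+}\mathcal{M}_- = {\mathcal D}(J_{\mathcal M})$, and on this common domain the action of $J_{\mathcal M}^+$ (which is defined by \eqref{fr10b} with $\mathcal{M}_\mp^{[\perp]}$ in place of $\mathcal{M}_\pm$) coincides with that of $J_{\mathcal M}$; hence $J_{\mathcal M}=J_{\mathcal M}^+$. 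For the converse, suppose $JJ_{\mathcal M}$ is self-adjoint, equivalently $J_{\mathcal M}=J_{\mathcal M}^+$, so in particular ${\mathcal D}(J_{\mathcal M})={\mathcal D}(J_{\mathcal M}^+)$, that is $\mathcal{M}_+\dot{+}\mathcal{M}_- = \mathcal{M}_-^{[\perp]}\dot{+}\mathcal{M}_+^{[\perp]}$. Since $\mathcal{M}_+$ is positive it is contained in some maximal positive subspace $\widetilde{\mathcal M}_+$; then $\widetilde{\mathcal M}_+^{[\perp]}$ is maximal negative and $\widetilde{\mathcal M}_+ \subseteq (\mathcal{M}_+^{[\perp]})$ fails unless $\mathcal{M}_+$ is already maximal. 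More directly: if $\mathcal{M}_+$ were not maximal positive, pick a positive vector $x\notin\mathcal{M}_+$ with $\mathcal{M}_+ \dot+ \operatorname{span}\{x\}$ still positive; one checks $x\in\mathcal{M}_-^{[\perp]}$, and comparing ranks/decompositions in the asserted equality of domains yields a contradiction with $J_{\mathcal M}=J_{\mathcal M}^+$ being an involution-type operator. The cleanest route is to use that $J_{\mathcal M}=J_{\mathcal M}^+$ forces $\mathcal{M}_\pm^{[\perp]}=\mathcal{M}_\mp$, and the equation $\mathcal{M}^{[\perp][\perp]}=\mathcal{M}$ for a definite subspace is equivalent to its maximality (again \cite{AZ}).

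The steps in order: (1) record $(JJ_{\mathcal M})^* = J_{\mathcal M}^* J$ from $J=J^*=J^{-1}$; (2) invoke $A^+=JA^*J$ to get $J_{\mathcal M}^*=JJ_{\mathcal M}^+J$, hence $(JJ_{\mathcal M})^*=JJ_{\mathcal M}^+$; (3) conclude $JJ_{\mathcal M}$ is self-adjoint $\iff J_{\mathcal M}=J_{\mathcal M}^+$, being careful that equality of operators includes equality of domains; (4) translate $J_{\mathcal M}=J_{\mathcal M}^+$ into $\mathcal{M}_\pm^{[\perp]}=\mathcal{M}_\mp$ using the explicit description of ${\mathcal D}(J_{\mathcal M}^+)$ and the action formula; (5) finish with the Krein-space characterization that a definite subspace $\mathcal{M}$ satisfies $\mathcal{M}^{[\perp][\perp]}=\mathcal{M}$ precisely when it is maximal definite, applied to both $\mathcal{M}_+$ and $\mathcal{M}_-$.

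The main obstacle I anticipate is step (3)–(4): one must be scrupulous about \emph{domains}, since $J_{\mathcal M}$ and $J_{\mathcal M}^+$ are in general unbounded, and ``$JJ_{\mathcal M}$ self-adjoint'' must be read as equality of the unbounded operator with its adjoint including domains. The subtlety is that $J_{\mathcal M}=J_{\mathcal M}^+$ could conceivably hold on a common domain smaller than both $\mathcal{M}_+\dot+\mathcal{M}_-$ and $\mathcal{M}_-^{[\perp]}\dot+\mathcal{M}_+^{[\perp]}$; one must rule this out by noting that self-adjointness of $JJ_{\mathcal M}$ as stated in the lemma refers to $J_{\mathcal M}$ as \emph{already defined} by \eqref{fr10b} on all of $\mathcal{M}_+\dot+\mathcal{M}_-$, so $(JJ_{\mathcal M})^*$ must have exactly that domain, which via step (2) forces $\mathcal{M}_-^{[\perp]}\dot+\mathcal{M}_+^{[\perp]}=\mathcal{M}_+\dot+\mathcal{M}_-$. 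Everything else is either a direct computation or a citation to the Krein-space monograph \cite{AZ}.
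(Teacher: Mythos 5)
Your reduction of the lemma to the statement $J_{\mathcal{M}}=J_{\mathcal{M}}^+$ (via $(JJ_{\mathcal{M}})^*=J_{\mathcal{M}}^*J=JJ_{\mathcal{M}}^+$) is correct and is a genuinely different, more self-contained route than the paper's, which for the forward direction just identifies $JJ_{\mathcal{M}}$ with the self-adjoint operator $e^{Q}$ of Section \ref{2.2} and for the converse cites \cite[Proposition 4.2]{Sud}. However, your argument has a genuine gap at the final step. The Krein-space ``fact'' you invoke to finish --- that for a definite subspace $\mathcal{M}$ the equality $\mathcal{M}^{[\perp][\perp]}=\mathcal{M}$ is equivalent to maximality --- is false: for \emph{any} linear manifold $\mathcal{L}$ one has $\mathcal{L}^{[\perp]}=(J\mathcal{L})^{\perp}$, hence $\mathcal{L}^{[\perp][\perp]}=\overline{\mathcal{L}}$, so the double-complement condition detects only closedness and your step (5) proves nothing. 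What your step (4) correctly delivers is the pair of equalities $\mathcal{M}_{+}^{[\perp]}=\mathcal{M}_{-}$ and $\mathcal{M}_{-}^{[\perp]}=\mathcal{M}_{+}$ (as $\pm1$ eigenspaces of $J_{\mathcal{M}}=J_{\mathcal{M}}^+$), and from these maximality follows from the genuinely standard criterion that a positive subspace is maximal positive if and only if its $J$-orthogonal complement contains no positive vectors --- here $\mathcal{M}_{+}^{[\perp]}=\mathcal{M}_{-}$ is negative --- and symmetrically for $\mathcal{M}_{-}$. Your ``more directly'' alternative (picking $x\notin\mathcal{M}_{+}$ and ``comparing ranks/decompositions'') is not an argument and should be dropped.

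A second, related problem sits in your forward direction: the claim $\mathcal{M}_{\pm}^{[\perp]}=\mathcal{M}_{\mp}$ is \emph{not} a consequence of maximality alone. Two maximal definite subspaces of opposite sign need not be $J$-orthogonal (take $\mathcal{M}_{+}=\mathrm{span}\{e_{+}\}$ and $\mathcal{M}_{-}=\mathrm{span}\{e_{-}+\varepsilon e_{+}\}$ in a two-dimensional Krein space), and then $\mathcal{M}_{+}^{[\perp]}\neq\mathcal{M}_{-}$ and $J_{\mathcal{M}}\neq J_{\mathcal{M}}^+$. The lemma is stated in the part of Section 4 devoted to $J$-orthogonal sequences, so the standing hypothesis is that $\mathcal{M}_{\pm}$ are $J$-orthogonal; you acknowledge this at the outset but do not use it where it is indispensable. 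With $J$-orthogonality the forward direction does go through: $\mathcal{M}_{\mp}\subseteq\mathcal{M}_{\pm}^{[\perp]}$, the complement of a maximal definite subspace is maximal definite of the opposite sign, and a maximal definite subspace contained in a definite subspace of the same sign coincides with it. Finally, you should note explicitly that $\mathcal{D}=\mathcal{M}_{+}\dot{+}\mathcal{M}_{-}$ must be dense for $J_{\mathcal{M}}^+$ (hence $(JJ_{\mathcal{M}})^*$) to be single-valued and for the description $\mathcal{D}(J_{\mathcal{M}}^+)=\mathcal{M}_{-}^{[\perp]}\dot{+}\mathcal{M}_{+}^{[\perp]}$ to be available without presupposing the maximality you are trying to prove.
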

\begin{proof}
If ${\mathcal M}_\pm$ are maximal definite, then $JJ_{\mathcal{M}}$ coincides
with the self-adjoint operator $e^{Q}$ from Section \ref{2.2}.
Conversely, if  $JJ_{\mathcal{M}}$ is  self-adjoint, then ${\mathcal{M}_{\pm}}$
have to be maximal (it follows from \cite[Proposition 4.2]{Sud} since $JJ_{\mathcal{M}}$ coincides with $G_0$).
\end{proof}

\begin{remark} The maximality of ${\mathcal{M}_{\pm}}$ ensures the completeness
of the corresponding $J$--orthogonal sequence $\{f_n\}$ in $(\sH, (\cdot,\cdot))$. The inverse statement
is not true (see, e.g., \cite{Sud}).  We can only state that the subspaces ${\mathcal{M}_{\pm}}$
are definite in   $(\sH, [\cdot,\cdot])$ and theirs direct sum is dense in $(\sH, (\cdot, \cdot))$.
\end{remark}

Let $\{f_n\}$ be a complete  $J$-orthogonal sequence. Its biorthogonal sequence $\{\gamma_n\}$  consists of  the elements
 $\gamma_n=\frac{Jf_n}{[f_n,f_n]}.$ By Proposition \ref{prop5}, if one of sequences $\{f_n\}$,   $\{\gamma_n\}$ is
  a $J$-frame then another one is also a $J$-frame.

Let a complete $J$-orthogonal sequence  $\cF=\{f_n\}$ be a $J$-frame.  Then Proposition \ref{fr41} holds and the operator
$Sf=\sum_{n=1}^\infty[f,f_n]f_n$ is well defined on the energetic linear manifold $\mathfrak{D}$. It is easy to see
that $S^{-1}f_n={f_n}/{[f_n,f_n]}$ and therefore, the decomposition \eqref{fr71}
can be  rewritten as follows:
\begin{equation}\label{fr51}
f=\sum_{n=1}^\infty\frac{[f,f_n]}{[f_n,f_n]}f_n=\sum_{n=1}^\infty(f,\gamma_n)f_n, \qquad f\in\mathfrak{D},
\end{equation}
where the series  \eqref{fr51} converges  with respect to the norm
$\|\cdot\|_1=\sqrt{(e^{Q}\cdot,\cdot)}$, where $e^{Q}$ is a positive self-adjoint operator
in $\sH$ such that
$
e^{Q}f_n=JJ_{\mathcal{M}}{f_n}=\sigma_n{J}f_n=\sigma_n[f_n,f_n]\gamma_n=|[f_n,f_n]|\gamma_n.
$

\begin{corollary}
Let a complete $J$-orthogonal sequence $\cF=\{f_n\}$ be a $J$-frame. Then $\cF$ is a Bessel sequence in $\sH$ if and only
if  $\cF$ is a Riesz basis in $\sH$.
\end{corollary}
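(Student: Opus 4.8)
The plan is to prove the two implications separately, noting first that the forward direction (Bessel $\Rightarrow$ Riesz basis) is the only nontrivial one, since a Riesz basis is automatically a Bessel sequence. So assume $\cF=\{f_n\}$ is a complete $J$-orthogonal $J$-frame that is also a Bessel sequence in $(\sH,(\cdot,\cdot))$. By Proposition \ref{prop5}, the $J$-boundedness condition \eqref{fr299} holds: $0<A\le|[f_n,f_n]|\le B<\infty$. First I would record the orthogonality structure: for $n\neq m$ we have $(f_n,f_m)=[Jf_n,f_m]$, and since $Jf_n$ is a scalar multiple of $\gamma_n$ while $[\gamma_n,f_m]=[f_n,f_m]/[f_n,f_n]=0$, we get $(f_n,f_m)=0$. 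Thus $\cF$ is in fact an \emph{orthogonal} sequence in the original Hilbert space $(\sH,(\cdot,\cdot))$, with $\|f_n\|^2=(f_n,f_n)=[Jf_n,f_n]$.

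Next I would control the norms $\|f_n\|$ from above and below. Writing $f_n=f_n^++f_n^-$ in the fundamental decomposition \eqref{fr7}, one has $[f_n,f_n]=\|f_n^+\|^2-\|f_n^-\|^2$ and $\|f_n\|^2=\|f_n^+\|^2+\|f_n^-\|^2$, so $|[f_n,f_n]|\le\|f_n\|^2$; hence \eqref{fr299} gives $\|f_n\|^2\ge A>0$, i.e. the norms are bounded below. For the upper bound I would use the Bessel hypothesis: testing the Bessel inequality $\sum_m|(f,f_m)|^2\le B'\|f\|^2$ (with $B'$ the Bessel bound) at $f=f_n$ and using orthogonality, only the $m=n$ term survives on the left, giving $\|f_n\|^4\le B'\|f_n\|^2$, so $\|f_n\|^2\le B'$. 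Therefore $0<A\le\|f_n\|^2\le B'<\infty$: the sequence $\cF$ is an orthogonal system in $(\sH,(\cdot,\cdot))$ with norms bounded above and below.

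An orthogonal system with norms bounded between two positive constants is a Riesz sequence, and since $\cF$ is complete in $\sH$ (by the maximality of $\mathcal{M}_\pm$, as noted in the Remark following Proposition \ref{prop5}), it is a Riesz basis of $(\sH,(\cdot,\cdot))$: normalizing, $\{f_n/\|f_n\|\}$ is an orthonormal basis, and the diagonal multiplication operator with entries $\|f_n\|$ is bounded with bounded inverse, so $\cF$ is the image of an orthonormal basis under a bounded invertible operator. I expect the only mild subtlety is justifying the termwise evaluation of the Bessel inequality at $f=f_n$ and the switch between the indefinite-metric biorthogonality $[f_n,f_m]=0$ and the Hilbert-space orthogonality $(f_n,f_m)=0$; once one observes $(\cdot,\cdot)=[J\cdot,\cdot]$ and that $Jf_n\parallel\gamma_n$, this is immediate. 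Conversely, if $\cF$ is a Riesz basis of $\sH$ then it is trivially a Bessel sequence, completing the equivalence.
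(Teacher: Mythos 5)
There is a genuine gap, and it sits at the very first step: the claim that a $J$-orthogonal sequence is orthogonal in the original Hilbert space $(\sH,(\cdot,\cdot))$ is false. Your computation of $[\gamma_n,f_m]$ is where it goes wrong: since $\gamma_n=Jf_n/[f_n,f_n]$, one has $[\gamma_n,f_m]=[Jf_n,f_m]/[f_n,f_n]=(f_n,f_m)/[f_n,f_n]$, which is exactly the quantity you are trying to show vanishes --- you have substituted $[f_n,f_m]$ for $[Jf_n,f_m]$, i.e.\ assumed the conclusion. The $J$-orthogonality $[f_n,f_m]=0$ yields only the \emph{biorthogonality} $(\gamma_n,f_m)=\delta_{nm}$ with respect to $(\cdot,\cdot)$; it yields genuine orthogonality of $\{f_n\}$ only with respect to the new inner product $(\cdot,\cdot)_1$ on $\widehat{\sH}$ (write $f_n=f_n^++f_n^-$ in the fundamental decomposition: $[f_n,f_m]=[f_n^+,f_m^+]+[f_n^-,f_m^-]=0$ while $(f_n,f_m)=[f_n^+,f_m^+]-[f_n^-,f_m^-]$, and these need not vanish separately unless each $f_n$ lies in $\sH_+$ or $\sH_-$). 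A concrete counterexample is the paper's own Section 4.III: $f_n=e^{-x/2}g_n$ in $L_2(-a,a)$ is $J$-orthogonal, yet $(f_n,f_m)=\int e^{-x}g_n\overline{g_m}\,dx\neq 0$ in general. Were your claim true, the norm bounds you correctly derive would make every complete $J$-bounded $J$-orthogonal sequence a Riesz basis of $\sH$, which would trivialize the corollary and contradict the existence of $J$-orthogonal Schauder bases that are $J$-frames but not Riesz bases --- the central theme of Section 4.

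The argument the paper actually uses avoids this entirely: a complete $J$-orthogonal $J$-frame is a \emph{bounded orthogonal basis of} $(\widehat{\sH},(\cdot,\cdot)_1)$, so every $f\in\widehat{\sH}$ has an expansion $f=\sum_n c_nf_n$ with $\{c_n\}\in l_2(\mathbb{N})$; by the standard characterization of Bessel sequences, the Bessel property in $(\sH,(\cdot,\cdot))$ is equivalent to convergence of $\sum_n c_nf_n$ in $\sH$ for every $\{c_n\}\in l_2(\mathbb{N})$, which forces $\widehat{\sH}=\sH$ and hence that $\cF$ is a Riesz basis of $\sH$. Your observations that $\|f_n\|^2\geq A$ (from $|[f,f]|\leq\|f\|^2$) and $\|f_n\|^2\leq B'$ (from testing the Bessel bound at $f=f_n$, which in fact needs no orthogonality since the single term $|(f_n,f_n)|^2$ already bounds the sum from below) are both correct, but they do not suffice to conclude.
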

\begin{proof}
If a complete $J$-orthogonal set  $\cF=\{f_n\}$ is a $J$-frame, then
the sequence $\{f_n\}$ is a bounded orthogonal basis in $(\widehat{\sH}, (\cdot,\cdot)_1)$.
Therefore, each $f\in\widehat{\sH}$ admits the presentation $f=\sum_{n\in\mathbb{N}}c_nf_n$, where
$\{c_n\}\in{l_2(\mathbb{N})}$.
According to \cite[Theorem 7.4]{Heil},  the property of being Bessel sequence for  $\{f_n\}$ is equivalent to the fact
that  the series $\sum_{n\in\mathbb{N}}c_n{f_n}$ converges in $({\sH}, (\cdot,\cdot))$
for each sequence $\{c_n\}$  from ${l_2(\mathbb{N})}$.
The latter means that $\widehat{\sH}=\sH$ and  therefore,  $\cF$ is a Riesz basis in $\sH$.
\end{proof}

{\bf II.} In what follows we  suppose  that  $\cF=\{f_n\}$
is a Schauder basis of $(\sH, (\cdot,\cdot))$.

\begin{corollary}\label{fr73}
An $J$-orthogonal Schauder basis $\cF$  turns out to be a $J$-frame  if and only  if ${\cF}$ is $J$-bounded.
\end{corollary}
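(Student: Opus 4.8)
The plan is to derive Corollary~\ref{fr73} directly from Proposition~\ref{prop5} by checking that, for a Schauder basis, the two hypotheses appearing in Proposition~\ref{prop5} --- namely $J$-boundedness \eqref{fr299} and maximal definiteness of $\mathcal{M}_\pm$ --- collapse to the single condition of $J$-boundedness. The forward implication is immediate: if $\cF$ is a $J$-frame, then \eqref{fr5b} evaluated at $f=f_n$ gives \eqref{fr299}, so $\cF$ is $J$-bounded, exactly as in the proof of Proposition~\ref{prop5}. So the real content is the converse.

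For the converse, suppose $\cF=\{f_n\}$ is a $J$-orthogonal Schauder basis of $(\sH,(\cdot,\cdot))$ that is $J$-bounded. By Proposition~\ref{prop5} it suffices to show that the subspaces $\mathcal{M}_\pm$ defined by \eqref{fr101b} are maximal definite. First I would observe that, since $\{f_n\}$ is a Schauder basis, it is in particular complete in $(\sH,(\cdot,\cdot))$, so $\overline{\mathrm{span}\{\cF_+\}}+\overline{\mathrm{span}\{\cF_-\}}$ is dense in $\sH$; moreover $J$-orthogonality forces $\mathcal{M}_+\cap\mathcal{M}_-=\{0\}$ and, more, the decomposition of each $f$ in the Schauder basis splits along $\mathbb{N}_+$ and $\mathbb{N}_-$, giving a genuine (topological) direct-sum type structure on the linear span. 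The key step is to argue that a $J$-orthogonal Schauder basis cannot produce subspaces $\mathcal{M}_\pm$ that are merely definite but not maximal definite. Here I would invoke the characterization recorded just before the statement: $\mathcal{M}_\pm$ are maximal definite iff $JJ_{\mathcal{M}}$ is self-adjoint in $(\sH,(\cdot,\cdot))$. On the linear span $\mathrm{span}\{\cF\}$ the operator $JJ_{\mathcal{M}}$ acts diagonally, $JJ_{\mathcal{M}}f_n=\sigma_n Jf_n=\dfrac{\sigma_n[f_n,f_n]}{\,[f_n,f_n]\,}\cdot[f_n,f_n]\gamma_n$, i.e.\ it is the diagonal operator with eigenvalue $|[f_n,f_n]|$ on the ``coordinate direction'' $f_n$ relative to the biorthogonal system $\{\gamma_n\}$; the Schauder-basis property provides the bounded coordinate functionals that make this diagonal operator closable with the right domain, and $J$-boundedness \eqref{fr299} controls the spectrum away from $0$ and $\infty$ in the relevant sense so that its closure is self-adjoint. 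That self-adjointness is precisely the maximality criterion, and then Proposition~\ref{prop5} applies.

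The main obstacle I anticipate is the step linking the Schauder-basis hypothesis to the self-adjointness (equivalently maximality) conclusion: one must be careful that an abstract Schauder basis need not be unconditional, so the diagonal operator $JJ_{\mathcal{M}}$ need not be essentially self-adjoint on $\mathrm{span}\{\cF\}$ a priori. I would handle this by using the biorthogonal system $\{\gamma_n\}$ (well defined since $\cF$ is $J$-orthogonal and $J$-bounded, via $\gamma_n=Jf_n/[f_n,f_n]$) together with the fact that for a Schauder basis the partial-sum projections $P_N f=\sum_{n\le N}(f,\gamma_n)f_n$ are uniformly bounded; this lets me define $J_{\mathcal{M}}$ and its adjoint on the correct domains and identify $JJ_{\mathcal{M}}$ with the energetic operator $e^Q$ of Section~\ref{2.2}, whose self-adjointness is built into the construction. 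An alternative, cleaner route --- which I would present if the direct argument gets technical --- is: $J$-boundedness makes $\{f_n\}$ a bounded $J$-orthogonal Schauder basis, hence (after normalizing by $|[f_n,f_n]|^{1/2}$) a $J$-orthonormal Schauder basis; its $\mathrm{span}$ is dense and the candidate fundamental decomposition $\widehat{\sH}=\widehat{\mathcal M}_+\oplus_1\widehat{\mathcal M}_-$ of Section~\ref{2.2} is legitimate, so $\mathcal M_\pm$ are automatically maximal definite in $(\sH,[\cdot,\cdot])$ because their completions in $(\cdot,\cdot)_1$ exhaust $\widehat\sH$ and the lemma preceding the statement then yields self-adjointness of $JJ_{\mathcal M}$. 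Either way, once maximal definiteness of $\mathcal{M}_\pm$ is secured, Proposition~\ref{prop5} immediately gives that $\cF$ is a $J$-frame, completing the proof.
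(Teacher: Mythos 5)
Your overall skeleton matches the paper's: the forward direction is immediate from Proposition~\ref{prop5} (evaluate \eqref{fr5b} at $f=f_n$), and the whole content of the converse is that for a $J$-orthogonal Schauder basis the subspaces $\mathcal{M}_\pm$ are automatically maximal definite. The paper settles this last point by citing it as a known theorem from \cite{AZ}; you instead try to prove it, and both of your arguments have genuine gaps. In your first route you want essential self-adjointness of the ``diagonal'' operator $JJ_{\mathcal{M}}$ on $\mbox{span}\{\cF\}$, and you claim that $J$-boundedness ``controls the spectrum away from $0$ and $\infty$'' so that the closure is self-adjoint. That cannot be the mechanism: $JJ_{\mathcal{M}}$ is diagonal with respect to a basis that is orthogonal only in the \emph{indefinite} metric, not in $(\cdot,\cdot)$, and when $\mathcal{D}\neq\sH$ the operator $e^{Q}=JJ_{\mathcal{M}}$ is unbounded even though its eigenvalues $|[f_n,f_n]|$ all lie in $[A,B]$. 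So bounding the eigenvalues proves nothing about self-adjointness of the closure; the actual work --- showing that the uniform boundedness of the Schauder partial-sum projections forces maximality of $\mathcal{M}_\pm$ --- is precisely the theorem from \cite{AZ} that the paper invokes, and you never carry it out. The paper's own remark following the lemma on $JJ_{\mathcal{M}}$ warns that completeness of a $J$-orthogonal sequence does \emph{not} imply maximality of $\mathcal{M}_\pm$, so the Schauder hypothesis must enter in an essential, quantitative way; your sketch never isolates where.

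Your ``cleaner alternative route'' is circular. The construction of $(\widehat{\sH},(\cdot,\cdot)_1)$ and of the decomposition $\widehat{\sH}=\widehat{\mathcal{M}}_+\oplus_1\widehat{\mathcal{M}}_-$ in Section~\ref{2.2} is carried out under the standing assumption that $\mathcal{M}_\pm$ are already maximal definite, so you cannot use that decomposition to conclude maximality. Even setting that aside, the observation that the completions $\widehat{\mathcal{M}}_\pm$ exhaust $\widehat{\sH}$ is true by construction for \emph{any} pair of definite subspaces with dense direct sum and therefore cannot distinguish maximal from non-maximal $\mathcal{M}_\pm$ inside $\sH$. To repair the proof, either cite the maximality result for $J$-orthogonal Schauder bases from \cite{AZ} (as the paper does), or give a genuine argument from the uniform boundedness of the partial-sum projections $P_Nf=\sum_{n\le N}(f,\gamma_n)f_n$.
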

\begin{proof}
It follows from Proposition \ref{prop5} and the fact that the subspaces  ${\mathcal M}_\pm$ are maximal
in the case of  an $J$-orthogonal Schauder basis  \cite{AZ}.
\end{proof}
\begin{corollary}\label{fr72}
 If an $J$-orthogonal Schauder basis  $\cF=\{f_n\}$ is a $J$-frame,  then
 the series \eqref{fr51} is convergent in the energetic space $(\mathfrak{D}, (\cdot,\cdot)_{en})$, i.e.,
 it is convergent with respect to the energetic norm $\|\cdot\|_{en}$.
\end{corollary}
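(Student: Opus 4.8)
The plan is to exploit the fact that the energetic norm is governed by the two norms already in play, namely $\|g\|_{en}^2=\|g\|^2+\|g\|_1^2$ for $g\in\mathfrak{D}$, and that the convergence of the series \eqref{fr51} in \emph{each} of $\|\cdot\|$ and $\|\cdot\|_1$ has been established earlier. First I would fix $f\in\mathfrak{D}=\sH\cap\widehat{\sH}$ and write $S_Nf=\sum_{n=1}^{N}(f,\gamma_n)f_n$ for the $N$-th partial sum in \eqref{fr51}, where $\gamma_n=Jf_n/[f_n,f_n]$. Since $\cF$ is a $J$-orthogonal Schauder basis of $(\sH,(\cdot,\cdot))$ which is a $J$-frame, Corollary \ref{fr73} gives that $\cF$ is $J$-bounded and the subspaces $\mathcal{M}_\pm$ are maximal definite and $J$-orthogonal, so the operator $Q$ and the energetic space $\mathfrak{D}$ of Section \ref{2.2} are available. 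The sequence $\{\gamma_n\}$ is biorthogonal to $\cF$ — a one-line check, $(f_m,\gamma_n)=[f_m,f_n]/[f_n,f_n]=\delta_{mn}$ by $J$-orthogonality — hence the scalars $(f,\gamma_n)$ are precisely the expansion coefficients of $f$ relative to the Schauder basis $\cF$, and therefore $\|S_Nf-f\|\to 0$.

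Next I would record that the same partial sums converge in $\|\cdot\|_1$. This is exactly the content of Proposition \ref{fr41} combined with the identity $S^{-1}f_n=f_n/[f_n,f_n]$ derived in Section 4.2.I: one has $[f,f_n]S^{-1}f_n=(f,\gamma_n)f_n$, so the series \eqref{fr71} \emph{is} the series \eqref{fr51}, and Proposition \ref{fr41} asserts that $S_Nf\to f$ in $(\widehat{\sH},(\cdot,\cdot)_1)$, i.e. $\|S_Nf-f\|_1\to 0$ (here I also use Lemma \ref{agga20} to replace $[f,f_n]_1$ by $[f,f_n]$ on $\mathfrak{D}$).

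Finally I would combine the two. Each $S_Nf$ is a finite linear combination of the $f_n$, hence lies in $\mathcal{D}=\mathcal{M}_+\dot{+}\mathcal{M}_-\subset\mathfrak{D}=\mathcal{D}(e^{Q/2})$; together with $f\in\mathfrak{D}$ this gives $S_Nf-f\in\mathfrak{D}$, the common domain where all three norms make sense. Then
\[
\|S_Nf-f\|_{en}^2=\|S_Nf-f\|^2+\|S_Nf-f\|_1^2\longrightarrow 0,
\]
so the series \eqref{fr51} converges to $f$ with respect to $\|\cdot\|_{en}$; and since $(\mathfrak{D},(\cdot,\cdot)_{en})$ is complete and its limit here is $f\in\mathfrak{D}$, the convergence is genuinely inside the energetic space. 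When $\mathcal{D}=\sH$ the statement is anyway trivial, $Q=0$ and all three norms are equivalent, so it suffices to treat $\mathcal{D}\ne\sH$.

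I do not expect a real obstacle: the assertion is essentially a bookkeeping consequence of the definition of $\|\cdot\|_{en}$ and the two already-proved convergence facts. The one point deserving explicit care — and where I would be most careful — is the identification that the Schauder-basis expansion of $f$ (convergent in $\|\cdot\|$) and the series of Proposition \ref{fr41} (convergent in $\|\cdot\|_1$) are literally the \emph{same} series \eqref{fr51}, so that a single sequence of partial sums converges in both norms at once; this rests on the biorthogonality of $\{\gamma_n\}$ and on $S^{-1}f_n=f_n/[f_n,f_n]$.
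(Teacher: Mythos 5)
Your argument is correct and is essentially the paper's own proof, made explicit: the paper likewise combines the $\|\cdot\|_1$-convergence of \eqref{fr51} (already recorded just before that formula, via Proposition \ref{fr41} and $S^{-1}f_n=f_n/[f_n,f_n]$) with the $\|\cdot\|$-convergence coming from the Schauder basis property, and concludes via the definition $\|\cdot\|_{en}^2=\|\cdot\|^2+\|\cdot\|_1^2$. The only (harmless) slip is the aside that $\mathcal{D}=\sH$ forces $Q=0$ --- it only forces $Q$ bounded, but the norms are then equivalent and the conclusion still holds.
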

\begin{proof}
It follows from the definition of energetic norm in Section \ref{2.2} and the convergence of
\eqref{fr51} with respect to $\|\cdot\|$ (since $\cF$ is 
 Schauder basis).
\end{proof}

 The energetic linear manifold $\mathfrak{D}$ can be easily described.

\begin{proposition}\label{fr78}
Let an $J$-orthogonal Schauder basis  $\cF=\{f_n\}$ be a $J$-frame and $f\in\sH$. Then 
$f\in\mathfrak{D}$ if and only if  the sequence $\{[f,f_n]\}$  belongs to $l_2(\mathbb{N})$.
\end{proposition}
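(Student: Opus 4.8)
The plan is to use that, in the $J$-orthogonal case, $\cF$ behaves like a bounded \emph{orthogonal} basis of $(\widehat{\sH},(\cdot,\cdot)_1)$, and to match the Schauder coefficients of $f$ in $(\sH,(\cdot,\cdot))$ with the coefficients governing membership in the energetic completion $\mathfrak{D}$.

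First I would collect three elementary facts. (i) Since $\cF$ is a $J$-orthogonal $J$-frame, Corollary \ref{fr73} gives the $J$-bounds $0<A\le |[f_n,f_n]|\le B<\infty$. (ii) Since $\mathcal{M}_\pm$ are $J$-orthogonal, a direct computation from \eqref{fr9b} yields $(f_n,f_m)_1=\delta_{nm}|[f_n,f_n]|$; in particular $\{f_n\}$ is $(\cdot,\cdot)_1$-orthogonal with $\|f_n\|_1^2=|[f_n,f_n]|$. (iii) Since $\cF$ is a Schauder basis of $(\sH,(\cdot,\cdot))$, every $f\in\sH$ has a unique expansion $f=\sum_n c_n f_n$ converging in $\|\cdot\|$; applying the bounded functional $g\mapsto [g,f_n]=(g,Jf_n)$ and using $J$-orthogonality gives $c_n=[f,f_n]/[f_n,f_n]$, so $|c_n|^2|[f_n,f_n]|=|[f,f_n]|^2/|[f_n,f_n]|$. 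Combining (i) and (iii), the two series $\sum_n|c_n|^2|[f_n,f_n]|$ and $\sum_n|[f,f_n]|^2$ are each bounded by a constant multiple of the other, so it suffices to prove that $f\in\mathfrak{D}$ if and only if $\sum_n|c_n|^2|[f_n,f_n]|<\infty$.

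For the forward implication: if $f\in\mathfrak{D}$, then $\|f\|_1<\infty$ and, by Corollary \ref{fr72} (built on the reconstruction of Proposition \ref{fr41}), the partial sums $g_N=\sum_{n=1}^{N}c_n f_n$ converge to $f$ in $\|\cdot\|_1$. By (ii), $\|g_N\|_1^2=\sum_{n=1}^{N}|c_n|^2|[f_n,f_n]|$, so letting $N\to\infty$ and using continuity of the norm gives $\sum_n|c_n|^2|[f_n,f_n]|=\|f\|_1^2<\infty$. For the converse: if $\sum_n|c_n|^2|[f_n,f_n]|<\infty$, then (ii) shows $\{g_N\}$ is $\|\cdot\|_1$-Cauchy, and it is $\|\cdot\|$-Cauchy because $g_N\to f$ in $\sH$ by (iii); since $\|\cdot\|_{en}^2=\|\cdot\|^2+\|\cdot\|_1^2$, the sequence $\{g_N\}$ (which lies in $\mathcal{D}\subseteq\mathfrak{D}$, as each $f_n\in\mathcal{M}_\pm$) is Cauchy in the complete space $(\mathfrak{D},(\cdot,\cdot)_{en})$, hence converges there to some $\widetilde f\in\mathfrak{D}$; as $\|\cdot\|\le\|\cdot\|_{en}$, also $g_N\to\widetilde f$ in $\|\cdot\|$, so $\widetilde f=f$ and $f\in\mathfrak{D}$.

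The bilinear identity in (ii) and the comparison of the two sums are routine. The one step requiring care — and the genuine obstacle — is the legitimacy of passing between ``$f\in\sH$ with $\{[f,f_n]\}\in\ell_2$'' and ``$f$ lies in the abstract completion $\mathfrak{D}$'': one must know that the Schauder coefficients $c_n$ computed in $(\sH,(\cdot,\cdot))$ are precisely the coefficients appearing in the $\mathfrak{D}$-valid expansion \eqref{fr51}, and, in the converse direction, that the abstract $\|\cdot\|_{en}$-limit of $\{g_N\}$ can be identified with $f$ itself. Both are handled by the fact that $\|\cdot\|_{en}$ dominates $\|\cdot\|$ together with the convergence statements of Proposition \ref{fr41} and Corollary \ref{fr72}.
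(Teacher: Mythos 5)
Your proposal is correct and follows essentially the same route as the paper: both directions rest on the fact that $\{f_n\}$ is a bounded $(\cdot,\cdot)_1$-orthogonal basis of $\widehat{\sH}$ with $\|f_n\|_1^2=|[f_n,f_n]|$ comparable to $1$ by \eqref{fr299}, so that $\{[f,f_n]\}\in l_2(\mathbb{N})$ is equivalent to $\|\cdot\|_1$-convergence of the partial sums of \eqref{fr51}, which combined with the $\|\cdot\|$-convergence coming from the Schauder basis property yields Cauchyness in the energetic norm and hence membership in $\mathfrak{D}$. Your treatment is somewhat more explicit than the paper's (notably in identifying the $\|\cdot\|_{en}$-limit with $f$ via $\|\cdot\|\leq\|\cdot\|_{en}$), but there is no substantive difference in method.
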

\begin{proof}
If $f\in\mathfrak{D}$, then \eqref{fr51} converges simultaneously in $\sH$ and $\widehat{\sH}$.
Moreover, \eqref{fr299} holds and $\{f_n\}$ is a bounded orthogonal basis in $\widehat{\sH}$. 
This means that the sequence $\{[f,f_n]\}$ belongs to $l_2(\mathbb{N})$. 

To prove the inverse implication, we note that
 \eqref{fr51} converges for all $f\in\sH$  (since $\cF$ is a Schauder basis).
If $\{[f,f_n]\}\in{l_2(\mathbb{N})}$,  then the sequence $c_n=\frac{[f,f_n]}{[f_n,f_n]}$ also belongs to $l_2(\mathbb{N})$.
Hence,  $g_m=\sum_{n=1}^m{c_nf_n}$ is a Cauchy sequence in $\sH$ . The same is true for
the space $(\widehat{\sH}, (\cdot,\cdot)_1)$  since $\{f_n\}$ is a bounded orthogonal basis in $\widehat{\sH}$. 
Therefore,  $\{g_m\}$  is a  Cauchy sequence in $(\mathfrak{D}, (\cdot,\cdot)_{en})$ and  $f$ belongs to  $\mathfrak{D}$.
\end{proof}

Generally, we cannot state that the convergence of \eqref{fr51} is unconditional in $\mathfrak{D}$.
Let us discuss this point in detail. Denote
$$
\mathcal{D}_{un}=\{f\in\sH :  \mbox{the series \eqref{fr51} converges unconditionally in} \ \sH\}.
$$
\begin{theorem}\label{fr75}
Let an $J$-orthogonal Schauder basis  $\cF=\{f_n\}$  be a $J$-frame. Then
$$
\mathcal{D}_{un}\subset\mathcal{M}_{+}\dot{+}\mathcal{M}_{-}\subset\mathfrak{D},
$$
where the inclusions are strict and $f\in\mathcal{D}_{un}$  if  $\{[f,f_n]\}$ belongs to ${l_1}(\mathbb{N})$.
\end{theorem}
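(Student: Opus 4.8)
The plan is to establish the two inclusions and their strictness separately, then verify the sufficient condition involving $l_1(\mathbb{N})$. First I would prove $\mathcal{M}_+\dot+\mathcal{M}_-\subset\mathfrak{D}$. Recall from Proposition \ref{fr78} that for a $J$-orthogonal Schauder basis $J$-frame, an element $f\in\sH$ lies in $\mathfrak{D}$ precisely when $\{[f,f_n]\}\in l_2(\mathbb{N})$. For $f\in\mathcal{D}=\mathcal{M}_+\dot+\mathcal{M}_-$ we write $f=f_{\mathcal{M}_+}+f_{\mathcal{M}_-}$; using $J$-orthogonality of $\mathcal{M}_\pm$ and the fact that $\{f_n\}_{n\in\mathbb{N}_\pm}$ is a bounded orthogonal basis of $(\widehat{\mathcal{M}}_\pm,\pm[\cdot,\cdot]_1)$ together with Lemma \ref{agga20}, the coefficients $[f,f_n]=[f_{\mathcal{M}_\pm},f_n]_1$ are (up to the bounded factors $[f_n,f_n]$) the expansion coefficients of $f_{\mathcal{M}_\pm}$ in that orthogonal basis, hence square-summable. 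So $\{[f,f_n]\}\in l_2(\mathbb{N})$ and $f\in\mathfrak{D}$; this also shows $\mathcal{D}\subset\mathfrak{D}$ directly, and strictness follows because $\mathfrak{D}=\sH\cap\widehat{\sH}$ contains elements of $\widehat{\sH}$ not in $\mathcal{D}$ whenever $\mathcal{D}\neq\sH$ (and this is the relevant case, since if $\mathcal{D}=\sH$ the whole discussion trivializes and $e^{Q/2}$ would be bounded, contrary to the standing assumption that at least one $\mathcal{M}_\pm$ is not uniformly definite).

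Next I would handle $\mathcal{D}_{un}\subset\mathcal{D}$. Suppose $f\in\mathcal{D}_{un}$, so $\sum_n c_nf_n$ with $c_n=[f,f_n]/[f_n,f_n]$ converges unconditionally in $(\sH,(\cdot,\cdot))$ and, since $\cF$ is a Schauder basis, its sum is $f$ itself. Unconditional convergence of $\sum_n c_nf_n$ means that for every sign choice (indeed every bounded multiplier sequence) the reordered/multiplied series converges; in particular $\sum_{n\in\mathbb{N}_+}c_nf_n$ and $\sum_{n\in\mathbb{N}_-}c_nf_n$ each converge in $\sH$. Their sums lie in $\mathcal{M}_+$ and $\mathcal{M}_-$ respectively (each partial sum does, and these subspaces are closed), so $f$ is their sum, hence $f\in\mathcal{M}_+\dot+\mathcal{M}_-=\mathcal{D}$. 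Strictness of this inclusion is the delicate point: one must exhibit $f\in\mathcal{D}$ for which \eqref{fr51} converges only conditionally in $\sH$. I would argue that if every $f\in\mathcal{D}$ produced unconditional convergence, then on $\mathcal{M}_+$ (say, the subspace that fails to be uniformly positive) the Schauder-basis expansion $\{f_n\}_{n\in\mathbb{N}_+}$ would be an unconditional basis; combined with its being a bounded orthogonal basis of $(\widehat{\mathcal{M}}_+,[\cdot,\cdot]_1)$, a standard argument (bounded unconditional bases are Riesz bases, cf. \cite{Heil}) would force $(\cdot,\cdot)$ and $(\cdot,\cdot)_1$ to be equivalent on $\mathcal{M}_+$, i.e.\ $\mathcal{M}_+$ uniformly positive — a contradiction.

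Finally, the sufficient condition: if $\{[f,f_n]\}\in l_1(\mathbb{N})$ then, using \eqref{fr299}, also $\{c_n\}=\{[f,f_n]/[f_n,f_n]\}\in l_1(\mathbb{N})$ since $|[f_n,f_n]|\geq A$; because $\{f_n\}$ is a Schauder basis there is a uniform bound $\|f_n\|\leq C$ on the basis vectors (normalized-coefficient functionals are bounded, so $\sup_n\|f_n\|<\infty$), whence $\sum_n|c_n|\,\|f_n\|\leq C\sum_n|c_n|<\infty$ and the series $\sum_n c_nf_n$ is absolutely convergent in $(\sH,(\cdot,\cdot))$, hence unconditionally convergent; since $\cF$ is a Schauder basis its sum is $f$, so $f\in\mathcal{D}_{un}$. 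The main obstacle I anticipate is the \emph{strictness} of $\mathcal{D}_{un}\subsetneq\mathcal{D}$: producing (or citing) a concrete $J$-orthogonal Schauder-basis $J$-frame together with an explicit element of $\mathcal{D}$ whose expansion converges conditionally but not unconditionally, or alternatively making the abstract ``bounded unconditional basis $\Rightarrow$ Riesz basis $\Rightarrow$ equivalent norms'' argument fully rigorous in this Krein-space setting; the other two inclusions and the $l_1$ criterion are routine once Proposition \ref{fr78} and Lemma \ref{agga20} are in hand.
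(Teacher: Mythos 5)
Your overall route coincides with the paper's: you get $\mathcal{D}_{un}\subset\mathcal{M}_+\dot{+}\mathcal{M}_-$ from the convergence of the subseries over $\mathbb{N}_\pm$ that unconditional convergence provides, you get strictness from ``bounded unconditional basis $\Rightarrow$ Riesz basis $\Rightarrow$ $\mathcal{M}_\pm=\widehat{\mathcal{M}}_\pm$, contradiction'', and you get the $l_1$ criterion from absolute convergence of $\sum c_nf_n$. (The paper does not argue the inclusion $\mathcal{M}_+\dot{+}\mathcal{M}_-\subset\mathfrak{D}$ inside the proof at all; it is already contained in Section \ref{2.2}, since $\mathcal{D}=\mathcal{D}(e^Q)\subsetneq\mathcal{D}(e^{Q/2})=\mathfrak{D}$ because $e^{Q/2}$ is unbounded. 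Your detour through Proposition \ref{fr78} lands in the same place, and your vaguer remark about $\sH\cap\widehat{\sH}$ should be replaced by this domain comparison.)

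The one step that fails as written is your justification of $\sup_n\|f_n\|<\infty$. For a Schauder basis with coordinate functionals $a_n$ and basis constant $K$ one only has $1\le\|a_n\|\,\|f_n\|\le 2K$, which bounds the product and neither factor separately: if $\{e_n\}$ is an orthonormal basis, then $\{ne_n\}$ is still a Schauder basis with $\|ne_n\|\to\infty$. So ``$\cF$ is a Schauder basis, hence $\|f_n\|\le C$'' is a non sequitur. This upper bound is precisely what your estimate $\sum_n|c_n|\,\|f_n\|<\infty$ requires, and it is also needed (in the norm of $(\sH,(\cdot,\cdot))$, not merely the $\|\cdot\|_1$-norm you invoke) to apply ``norm-bounded unconditional basis $\Rightarrow$ Riesz basis'' in your strictness argument, since a Riesz basis must be bounded above and below in the norm of the space in question. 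The paper closes this hole with a Krein-space-specific fact: a $J$-orthonormalized Schauder basis is norm-bounded in $\sH$ \cite{AZ}, which combined with \eqref{fr299} yields $0<C\le\|f_n\|^2\le D<\infty$. You need to import that result (or an equivalent); it does not follow from general Schauder-basis theory.
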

\begin{proof}
First of all we note that $\cF=\{f_n\}$ is bounded in $\sH$, i.e., $0<C\leq \|f_n\|^2 \leq D<\infty.$
Indeed,  the $J$-orthonormal Schauder basis $\{f_n/[f_n, f_n]\}$ is bounded in $\sH$ \cite{AZ}.
 This means that $\{f_n\}$ is bounded too (since  \eqref{fr299} holds due to Corollary \ref{fr73}).

 Let $f\in\mathcal{D}_{un}$. Then, simultaneously with \eqref{fr51}, the series
 $\sum_{n\in\mathbb{N}_\pm}c_n{f_n}$ converge to elements $f_\pm$ in the Hilbert space
$\sH$ (see, e.g., \cite[Theorem 3.10]{Heil}).  By the construction $f_\pm\in\mathcal{M}_\pm$. Therefore,
$f=f_++f_-$ belongs to  $\mathcal{M}_{+}\dot{+}\mathcal{M}_{-}$

The sequence $\{f_n\}_{\mathbb{N}_{+}}$ is a basis for the Hilbert spaces $(\mathcal{M}_+, (\cdot,\cdot))$ and
$(\widehat{\mathcal{M}}_+, [\cdot,\cdot])$.  In the last case, $\{f_n\}_{\mathbb{N}_+}$  is  a bounded 
orthogonal basis in $\widehat{\mathcal{M}}_+$. Hence, the relation
\begin{equation}\label{fr74}
f=\sum_{n\in\mathbb{N}_+}c_n{f_n},  \qquad \{c_n\}\in{l_2(\mathbb{N_+})}
\end{equation}
 establishes a one-to-one correspondence between Hilbert spaces  $\widehat{\mathcal{M}}_+$ and  $l_2(\mathbb{N_+})$.

Suppose that $\mathcal{M}_+\subset\mathcal{D}_{un}$. Then the basis $\{f_n\}_{\mathbb{N}_{+}}$ of $(\mathcal{M}_+, (\cdot,\cdot))$ is
unconditional and bounded. Hence, $\{f_n\}_{\mathbb{N}_{+}}$  is a Riesz basis
of $(\mathcal{M}_+, (\cdot,\cdot))$ \cite[Theorem 7.13]{Heil}  and the formula \eqref{fr74} describes all vectors of $\mathcal{M}_+$.
The later means that $\mathcal{M}_+=\widehat{\mathcal{M}}_+$  that is impossible. Therefore,  $\mathcal{M}_+$ does not belong to $\mathcal{D}_{un}$.
The same statement holds for $\mathcal{M}_-$.  The strict inclusion $\mathcal{D}_{un}\subset\mathcal{M}_{+}\dot{+}\mathcal{M}_{-}$
is proved.

Let $f\in\sH$ be such that $\{[f,f_n]\}\in{l_1}(\mathbb{N})$. Then, the sequence $\{{[f,f_n]}/{[f_n,f_n]}\}$ also belongs to
${l_1}(\mathbb{N})$ (since \eqref{fr299} holds). This means that the series \eqref{fr51} converges unconditionally \cite[Lemma 3.5]{Heil}.
Therefore, $f\in\mathcal{D}_{un}$.
\end{proof}

{\bf III.}   Examples of $J$-orthogonal sequences which are $J$-frames can be easily constructed
with the use of Proposition \ref{fr91}.  Indeed, let $\{g_n\}=\{g_n^+\}\cup\{g_n^-\}$  where
$\{g_n^\pm\}$ are orthogonal bounded bases of $\sH_{\pm}$. If
 $g_n\in\mathcal{D}(\cosh{Q/2})$ and the set
 $\{\cosh{Q/2} \ g_n\}$  is complete in $\sH$,  then we obtain a $J$-frame  $\{f_n=e^{-Q/2}g_n\}$, 
 which is a $J$-orthogonal sequence  since
 $$
 [f_n, f_m]=(Je^{-Q/2}g_n,  e^{-Q/2}g_m)=(e^{Q/2}Jg_n,  e^{-Q/2}g_m)=\mbox{sgn}[g_n,g_n]\|g_n\|^2\delta_{nm}.
 $$

 If $Q$ is a bounded operator in $\sH$, then $\{f_n\}$ turns out to be a $J$-orthogonal Riesz basis of $\sH$.
 The case of Schauder basis is characterized by the following conditions (\cite[Theorem 5.12]{Heil}): $Q$ is unbounded
 and  $sup_N\|\sum_{n=1}^N(f, e^{Q/2}g_n)e^{-Q/2}g_n\|<\infty$ for all $f\in\sH$. If the last condition does not hold,
 then the $J$-frame $\{f_n\}$ is an exact sequence of $\sH$.

Let $\sH=L_2(-a, a)$ and $Jf=f(-x)$. Then the subspaces $\sH_{\pm}$ of the fundamental decomposition \eqref{fr7}
coincide with the subspaces of even $L_2^{even}(-a, a)$  and odd $L_2^{odd}(-a, a)$  functions. Let $Q$ be
the multiplication operator $Qf=xf(x)$  in $\sH=L_2(-a, a)$. Obviously, $Q$  anticommutes with $J$ and
the operator $e^{-Q/2}$ acts as the multiplication by $e^{-x/2}$.  Let  $\{g_n\}=\{g_n^+\}\cup\{g_n^-\}$, where
$g_n^\pm$ are orthogonal bounded bases of the subspaces $L_2^{even}(-a, a)$ and $L_2^{odd}(-a, a)$. Then
the $J$-orthogonal sequence $f_n=e^{-x/2}g_n$ is a $J$-frame in the Krein space $(L_2(-a,a), [\cdot, \cdot])$
with the indefinite inner product $[f,g]=\int_{-a}^a{f(-x)}\overline{g(x)}dx$.

{\bf Acknowledgments.}
The authors would like to express our gratitude to Prof. F. Mart\'{i}nez Per\'{i}a for his inspiring lecture on $J$-frames delivered
during STA-2017 conference and forwarded articles which were helpful in the above investigations.

\bibliographystyle{amsplain}

\end{document}